\documentclass[11pt]{article}

\usepackage[english]{babel}
\usepackage[utf8]{inputenc}
\usepackage[T1]{fontenc}
\usepackage{amsmath, amssymb, amsthm}
\usepackage{geometry}
\usepackage{mathrsfs}
\usepackage{hyperref}
\usepackage{xcolor}
\usepackage{graphicx}
\newtheorem{theorem}{Theorem}
\newtheorem{lemma}{Lemma}
\newtheorem{proposition}{Proposition}
\newtheorem{remark}{Remark}

\newcommand{\dd}{\mathrm{d}}

\title{Existence of Traveling Waves for a Diffusive Nicholson Equation with Harvesting}
 
\author{Adri\'an G\'omez\thanks{e-mail: \texttt{agomez@ubiobio.cl}}
\quad Hel\'i Elorreaga\thanks{e-mail: \texttt{helorreaga@ubiobio.cl}}
\quad C\'esar Guayasam\'in\thanks{e-mail: \texttt{caguayasamin@outlook.com}}\\[1mm]
Departamento de Matem\'atica, Universidad del B\'io-B\'io, Casilla 5-C, Concepci\'on, Chile}

\begin{document}

\maketitle
\begin{abstract}
We prove the existence of traveling wave solutions for a diffusive
Nicholson model with a delayed linear harvesting term and two
delayed-response channels. Our approach combines monotone heteroclinic
connections of the associated scalar delay equation with the large-speed
traveling-wave theorem of Faria, Huang and Wu. For
\(0<\sigma<r\), we use a previous upper- and lower-solution construction
under an explicit parameter condition. For \(0<r\leq\sigma\), we
establish a new monotone heteroclinic connection.

To fit the abstract framework, we embed the scalar equation into an
auxiliary two-dimensional reaction--diffusion system and prove that every
relevant traveling wave lies on the invariant diagonal. We then determine
an explicit parameter region in which the required spectral hypotheses
hold. Consequently, the original diffusive Nicholson equation admits
traveling waves connecting the trivial and positive equilibria for all
sufficiently large wave speeds.
\end{abstract}

\noindent{\bf MSC 2020 Classification:} 35K57, 35C07, 34K18, 34K60, 92D25. {\bf Keywords :} Nicholson's blowflies equation, delayed harvesting, traveling waves, heteroclinic solution, delay differential equations, reaction--diffusion equations.

\section{Introduction}

Delay differential equations arise naturally in population dynamics, where the
present growth rate of a species may depend on past population densities due to
maturation times, delayed recruitment, resource regeneration, or delayed
intervention mechanisms. A paradigmatic example is Nicholson's blowflies
equation, introduced to model the dynamics of adult blowfly populations and later formulated in the now classical form
\[
    x'(t)
    =
    -\delta x(t)
    +
    \rho x(t-r)e^{- x(t-r)}.
\]
Here \(\delta>0\) represents the adult mortality rate, \(\rho>0\) is
the maximum per-capita recruitment rate, and \(r>0\) is the maturation delay. Since the
work of Gurney, Blythe and Nisbet \cite{GurneyBlytheNisbet1980}, Nicholson-type
models have become a central class of delayed population equations.

The corresponding diffusive Nicholson equation has also attracted considerable
attention, especially in connection with the existence of traveling wave fronts.
In its simplest normalized form, it is given by
\[
    N_t(t,x)
    =
    N_{xx}(t,x)
    -
    \delta N(t,x)
    +
    \rho N(t-r,x)e^{-N(t-r,x)}.
\]
Traveling waves are solutions of the form
\[
    N(t,x)=\phi(x+ct),
\]
where \(c>0\) is the wave speed and the profile \(\phi\) connects two steady
states. Such waves describe spatial transitions between different population
regimes and are important in understanding propagation phenomena in delayed
reaction--diffusion models. Various approaches have been developed to prove
their existence, including upper and lower solution methods, monotone
iteration schemes, and nonstandard orderings adapted to delayed and
non-quasi-monotone nonlinearities; see, for instance,
\cite{Schaaf1987,WuZou2001,SoZou2001}.

A different and particularly useful point of view was developed by Faria,
Huang and Wu \cite{FariaHuangWu2006}. Their theory establishes a connection
between traveling waves of delayed reaction--diffusion equations and
heteroclinic solutions of the associated delay differential equation. Roughly
speaking, under suitable spectral hypotheses at the equilibria, the existence of
a heteroclinic orbit for the temporal delay equation implies the existence of
traveling waves for the delayed reaction--diffusion equation for all sufficiently
large wave speeds. This framework is especially convenient when the
heteroclinic connection is already known by independent methods. It was used,
for example, in the scalar setting by Faria and Trofimchuk
\cite{FariaTrofimchuk2007}, who studied positive heteroclinic solutions and
traveling waves for population models with a single delay, including
Nicholson-type equations in parameter regimes where the birth function is not monotone.

In the present work we consider a diffusive Nicholson equation with a delayed
linear harvesting term,
\begin{equation}\label{eq:nicholson_diffusive}
    n_t(t,x)
    =
    n_{xx}(t,x)
    -\delta n(t,x)
    -H n(t-\sigma,x)
    +\rho n(t-r,x)e^{-n(t-r,x)},
\end{equation}
where the parameter \(H>0\) represents the harvesting intensity, while \(\sigma>0\) is the
delay associated with the harvesting response. The associated scalar delay
differential equation is
\begin{equation}\label{eq:scalar_dde}
    n'(t)
    =
    -\delta n(t)
    -H n(t-\sigma)
    +\rho n(t-r)e^{-n(t-r)}.
\end{equation}
If the inequality  $\frac{\rho}{\delta+H}>1$ holds, then \eqref{eq:scalar_dde} has two nonnegative equilibria,
\begin{equation}\label{equilibria}
    0
    \qquad\text{and}\qquad
    \kappa=
    \ln\left(\frac{\rho}{\delta+H}\right).
\end{equation}

The existence of monotone heteroclinic connections for
\eqref{eq:scalar_dde} was established in
\cite{GomezGuayasamin2025} for the distinct-delay regime \(0<\sigma<r\),
under suitable restrictions on the delays and parameters. That result was proved
by adapting the upper--lower solution and monotone iteration framework of
Wu and Zou \cite{WuZou2001} to a Nicholson equation with delayed harvesting. In the present
paper, we use this scalar heteroclinic connection as the main input for the
Faria--Huang--Wu theory. We also treat the complementary regime
\(0<r\leq\sigma\). This regime is not obtained as a direct limit of the
distinct-delay theorem for \(0<\sigma<r\), but it can be handled by a unified upper--lower solution construction.

The case \(r\leq\sigma\), in which the recruitment delay is shorter than the
harvesting delay, requires a different lower solution from the one used for
\(0<\sigma<r\). Indeed, the latter has the form
\[
\underline\phi(t)=
\begin{cases}
\alpha(1-e^{\varepsilon(t-t_0)})e^{\lambda t}, & t\le t_0,\\[4pt]
0, & t>t_0.
\end{cases}
\]
When \(0<\sigma<r\), the harvesting term \(\underline\phi(t-\sigma)\) vanishes
before the recruitment term \(\underline\phi(t-r)\), which is consistent with
the lower-solution inequality. In contrast, if \(r<\sigma\), then for
\(
    t_0+r\leq t< t_0+\sigma
\)
we have
\[
    \underline\phi(t)=0,
    \qquad
    \underline\phi(t-r)=0,
    \qquad
    \underline\phi(t-\sigma)>0.
\]
Hence
\[
\begin{aligned}
&-\underline\phi'(t)-\delta\underline\phi(t)
-H\underline\phi(t-\sigma)
+\rho\underline\phi(t-r)e^{-\underline\phi(t-r)}
\\
&\qquad =
-H\underline\phi(t-\sigma)<0,
\end{aligned}
\]
which contradicts the lower-solution inequality required in Theorem 2.1 of
\cite{GomezGuayasamin2025}. We therefore construct a lower solution with a
positive exponentially decaying right tail. The same construction also covers
the endpoint \(r=\sigma\), and hence yields a unified argument for
\(0<r\leq\sigma\); see Appendix \ref{app:heteroclinic_r_le_sigma}.

A second difficulty appears at the matching point of the lower solution.
The issue is not its lack of monotonicity, since non-monotone lower
solutions are compatible with the classical Wu--Zou approach. Rather,
our construction produces a downward jump in the derivative,
\[
    \underline\varphi'(0^-)
    >
    \underline\varphi'(0^+),
\]
whereas the Wu--Zou \cite{WuZouErratum2001} formulation requires
\[
    \underline\varphi'(0^-)
    \leq
    \underline\varphi'(0^+).
\]
Thus, the present lower solution falls outside that formulation.
However, the version of the monotone iteration method introduced in
\cite{GomezGuayasamin2025} does not impose this derivative junction
condition.

The main purpose of this paper is therefore to prove the existence of
traveling wave solutions for \eqref{eq:nicholson_diffusive} by combining two
ingredients. The first ingredient is the existence of a scalar heteroclinic
solution \(n_*\) of \eqref{eq:scalar_dde} connecting \(0\) to \(\kappa\).
The second ingredient is the abstract theorem of Faria--Huang--Wu, which
converts this heteroclinic connection, together with appropriate spectral
conditions, into traveling wave solutions for the diffusive equation when the
wave speed is sufficiently large.

A technical difficulty is that the delayed harvesting term and the delayed
recruitment term appear with different delays. To fit the Faria--Huang--Wu
framework, we embed the scalar equation into a suitable two-dimensional
auxiliary reaction--diffusion system. The scalar heteroclinic \(n_*\) is then embedded as a diagonal heteroclinic of the associated delay system. After applying the Faria--Huang--Wu theorem to the auxiliary system, we prove that every heteroclinic traveling wave of this auxiliary system lies on the diagonal. Consequently, the traveling wave obtained for the auxiliary system is in fact a traveling wave of the original scalar equation \eqref{eq:nicholson_diffusive}.

Our main result can now be stated as follows.
\begin{theorem}\label{thm:main_traveling_wave}
Assume that $1<\frac{\rho}{\delta+H}\leq e$. Let \(\sigma_0>0\) be the unique number satisfying
\[
    H\sigma_0e^{1+\delta\sigma_0}=1.
\]
Let \(r_*>0\) be the spectral threshold defined in
\eqref{rcrit}. Suppose that
\[
    0<\sigma\leq\sigma_0,
    \qquad
    0<r<r_*.
\]
If \(\sigma<r\), assume in addition that
\[
    H>\rho e^{2\lambda(\sigma-r)},
\]
where \(\lambda>0\) is the unique positive root of
\[
    -\lambda-\delta-He^{-\sigma\lambda}
    +\rho e^{-r\lambda}=0.
\]
Then there exists \(c^*>0\) such that, for every \(c>c^*\), equation
\eqref{eq:nicholson_diffusive} admits a traveling wave solution
$n(t,x)=\phi(x+ct)$ connecting the trivial equilibrium to the positive equilibrium \(\kappa\). More precisely,
\[
    \lim_{s\to-\infty}\phi(s)=0,
    \qquad
    \lim_{s\to+\infty}\phi(s)=\kappa.
\]
\end{theorem}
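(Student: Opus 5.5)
The proof proceeds in three stages: a scalar heteroclinic connection, an embedding into a two-dimensional system to which Faria--Huang--Wu applies, and a diagonal-reduction argument.

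\emph{Stage 1: the scalar heteroclinic.} We first produce a monotone heteroclinic solution \(n_*\) of \eqref{eq:scalar_dde} with \(n_*(-\infty)=0\) and \(n_*(+\infty)=\kappa\). The hypothesis \(\rho/(\delta+H)\le e\) gives \(\kappa\le 1\). Hence the birth function \(u\mapsto\rho ue^{-u}\) is nondecreasing on \([0,\kappa]\), so a quasi-monotone iteration is available.

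\begin{itemize}
\item If \(\sigma<r\), we invoke the result of \cite{GomezGuayasamin2025}. Its hypotheses are exactly \(\sigma\le\sigma_0\) and \(H>\rho e^{2\lambda(\sigma-r)}\). The condition \(H\sigma_0e^{1+\delta\sigma_0}=1\) is the usual one guaranteeing that the linearization \(-\lambda-\delta-He^{-\sigma\lambda}+\rho e^{-r\lambda}=0\) has a positive real root \(\lambda\), and that no oscillation is forced by the harvesting delay.
\item If \(r\le\sigma\), we use the new construction of Appendix~\ref{app:heteroclinic_r_le_sigma}. It takes the upper solution \(\min\{\kappa, e^{\lambda t}\}\) and a lower solution equal to \(\alpha(1-Me^{\varepsilon t})e^{\lambda t}\) for \(t\le0\), glued at \(t=0\) to a positive exponentially decaying right tail.
\end{itemize}

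In both cases the iteration of \cite{GomezGuayasamin2025}, which needs no derivative junction condition, yields \(n_*\). We then check that \(n_*\) is nondecreasing and tends to \(\kappa\). Convergence to \(\kappa\) follows from monotonicity and the fact that the only equilibria in \([0,\kappa]\) are \(0\) and \(\kappa\).

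\emph{Stage 2: embedding into a system.} We introduce the auxiliary system
\[
u_t=u_{xx}-\delta u-Hv(t-\sigma)+\rho u(t-r)e^{-u(t-r)},\qquad
v_t=v_{xx}-\delta v-Hu(t-\sigma)+\rho v(t-r)e^{-v(t-r)}
\]
(or whichever splitting puts the two delays in the form required by \cite{FariaHuangWu2006}). Its diagonal \(u=v\) is invariant and reproduces \eqref{eq:nicholson_diffusive}, so \((n_*,n_*)\) is a heteroclinic of the associated DDE system joining \((0,0)\) to \((\kappa,\kappa)\).

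Next we verify the spectral hypotheses of the Faria--Huang--Wu theorem:
\begin{itemize}
\item at \((0,0)\), hyperbolicity together with the right dimension of the unstable manifold;
\item at \((\kappa,\kappa)\), hyperbolic stability, i.e.\ all characteristic roots of the linearization in the left half-plane, with no purely imaginary roots.
\end{itemize}
The characteristic equation at \((\kappa,\kappa)\) factorizes into symmetric and antisymmetric modes,
\[
z+\delta\pm He^{-z\sigma}-\rho(1-\kappa)e^{-zr}=0 .
\]
Here \(0\le\rho(1-\kappa)\) and \(\rho(1-\kappa)<\delta+H\). The threshold \(r_*\) from \eqref{rcrit} is precisely the delay bound below which both factors are stable for \(\sigma\le\sigma_0\). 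The required condition on \(r\) then follows from \(r<r_*\) by a standard Rouché/continuation argument in \(r\), starting from \(r=0\). The theorem then yields \(c^*\) and, for each \(c>c^*\), a traveling wave \((\phi_1,\phi_2)\) of the system near the heteroclinic and connecting the two equilibria.

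\emph{Stage 3: reduction to the diagonal, the main obstacle.} We must show that \(\phi_1=\phi_2\). The plan is to exploit the construction in \cite{FariaHuangWu2006}: the wave is obtained as the unique fixed point, in a neighborhood of the heteroclinic, of a contraction. The swap map \((u,v)\mapsto(v,u)\) commutes with the system, and the heteroclinic \((n_*,n_*)\) is swap-invariant. Hence the swapped wave is also a fixed point in the same neighborhood, and uniqueness forces \(\phi_1=\phi_2\).

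If uniqueness is not directly available in the form stated, the fallback is to set \(w=\phi_1-\phi_2\). This difference satisfies a linear delayed second-order equation that tends to zero at \(\pm\infty\). For large \(c\), a maximum-principle or energy estimate exploiting the small parameter \(1/c\) should force \(w\equiv0\).

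Then \(\phi:=\phi_1\) solves \eqref{eq:nicholson_diffusive} with \(\phi(-\infty)=0\) and \(\phi(+\infty)=\kappa\), which proves the theorem.
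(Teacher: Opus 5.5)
There is a genuine gap, and it lies where the proof has its real content: the choice of auxiliary system. In your symmetric splitting the harvesting term is cross-coupled. At $(\kappa,\kappa)$ the antisymmetric mode $w=u-v$ therefore linearizes to $w'(t)=-\delta w(t)+Hw(t-\sigma)+q\,w(t-r)$, with $q=\rho e^{-\kappa}(1-\kappa)=(\delta+H)(1-\kappa)$ (not $\rho(1-\kappa)$). The factor $z+\delta-He^{-\sigma z}-qe^{-rz}$ equals $(\delta+H)\kappa-2H$ at $z=0$ and tends to $+\infty$ along the positive real axis. It therefore has a positive real root whenever $(\delta+H)\kappa<2H$. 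Since $\kappa\le 1$, this happens for \emph{all} $r,\sigma$ as soon as $\delta<H$, e.g.\ for the paper's sample values $\rho=6$, $\delta=1$, $H=2$. So $(H1)$ fails at $(\kappa,\kappa)$, and no bound on $r$ can repair it.

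Your claim that $r_*$ makes ``both factors'' stable misreads \eqref{rcrit}. There $r_*$ is built from $\chi_1$ and controls the unstable dimension at the \emph{zero} equilibrium; stability at $E_2$ follows from $\sigma\le\sigma_0$ alone, for every $r\ge0$. At $(0,0)$ your antisymmetric factor $z+\delta-He^{-\sigma z}-\rho e^{-rz}$ is negative at $z=0$, so there is an extra unstable root and $M\ge2$, which you never determine. Finally, your right-hand side needs both $u$ and $v$ at both delays. A two-dimensional response $\int d\eta\,g(U)$ cannot encode that, so your system does not even have the FHW form with $n=2$.

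Stage 3 then has no footing. FHW gives an $M$-dimensional local manifold of waves near $U_*$, not a unique fixed point, so swap-equivariance does not force $\phi_1=\phi_2$. The fallback also fails. The difference $w=\phi_1-\phi_2$ satisfies an equation with the positive-feedback term $+Hw(s-c\sigma)$, so no maximum principle is available. Moreover, for large $c$ its linearization at $-\infty$ admits solutions $e^{\mu s}$ with $\mu>0$, which decay as $s\to-\infty$, so the boundary condition there does not force $w\equiv 0$.

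The missing idea is to give both components the \emph{identical} delayed forcing $\rho u_1(t-r)e^{-u_1(t-r)}-Hu_2(t-\sigma)$, as in \eqref{eq:auxiliary_diffusive_system}. Then $u_1-u_2$ solves the undelayed equation $z_t=z_{xx}-\delta z$, which does three things at once:
(i) the system fits FHW with $\eta=\mathrm{diag}(\delta_{-r},\delta_{-\sigma})$ and $g=\mathrm{id}$;
(ii) the characteristic functions factor as $(z+\delta)\chi_1(z)$ and $(z+\delta)\chi_2(z)$, so $(H1)$ reduces to $\chi_2$, handled by $\sigma\le\sigma_0$, and $(H2)$ reduces to $\chi_1$, which has exactly one unstable root for $r<r_*$ by continuation from $(r,\sigma)=(0,0)$;
(iii) diagonality reduces to $\psi''-c\psi'-\delta\psi=0$ with $\psi(\pm\infty)=0$, which forces $\psi\equiv0$ because $\mu_-<0<\mu_+$.

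Stage 1 is acceptable as a citation, with two inaccuracies. The appendix's upper and lower solutions are not the ones you describe: $\min\{\kappa,e^{\lambda t}\}$ is not even in $\Gamma$, since $e^{\mu t}[\overline\varphi(t+s)-\overline\varphi(t)]$ drops to $0$ past the kink. Also, the positive root $\lambda$ comes from $\rho>\delta+H$; the bound $\sigma\le\sigma_0$ supplies its uniqueness and the quasi-monotonicity with $\mu=1/\sigma+\delta$.
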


The remainder of the paper is organized as follows. In Section
\ref{sec:scalar_limiting_equation}, we state the scalar heteroclinic-existence
result used throughout the paper, covering both delay orderings
\(0<\sigma<r\) and \(0<r\leq\sigma\). In Section
\ref{sec:fhw_embedding}, we introduce the auxiliary two-dimensional
reaction--diffusion system, show that its diagonal is invariant, and embed the scalar heteroclinic solution into the associated delay system. Section \ref{sec:traveling_waves_diagonal} derives the traveling-wave profile equations and proves that every wave of the auxiliary system connecting the relevant equilibria lies on the diagonal. In Section
\ref{sec:spectral_hypotheses}, we establish explicit conditions ensuring that the spectral hypotheses \((H1)\) and \((H2)\) of the Faria--Huang--Wu theorem are satisfied. Section \ref{sec:application_fhw} combines these results to
prove Theorem \ref{thm:main_traveling_wave}. Finally, Appendix
\ref{app:heteroclinic_r_le_sigma} provides the detailed upper--lower solution
construction for the complementary regime \(0<r\leq\sigma\).

\section{Heteroclinic connections for non-diffusive Nicholson's equation }
\label{sec:scalar_limiting_equation}
The limiting
delay equation associated with the diffusive Nicholson equation with delayed linear harvesting is given by \eqref{eq:scalar_dde}. The role of this section is to guarantee the existence of a monotone
heteroclinic solution \(n_*\) connecting the trivial equilibrium to the positive
one. As in \cite{GomezGuayasamin2025}, throughout this paper we assume that 
\begin{equation}\label{eq:main_scalar_assumption}
    1<\frac{\rho}{\delta+H}\le e.
\end{equation}
Consequently, $0<\kappa\le1$.

We shall use the following scalar existence result.
\begin{theorem}
\label{prop:scalar_profiles}
Assume that \eqref{eq:main_scalar_assumption} holds and suppose that one of the following alternatives holds.
\begin{description}
    \item[\((D)\)] $0<\sigma<r$ and     
    \begin{enumerate}
        \item $\sigma\in(0,\sigma_0],$ where $\sigma_0$ satisfies the relation $H\sigma_0e^{1+\sigma_0\delta}=1;$
        \item $H> \rho e^{2\lambda(\sigma-r)},$ where \(\lambda>0\) is the positive root of
    \[
        -z-\delta-He^{-\sigma z}+\rho e^{-rz}=0.
    \]
    \end{enumerate}
    \item[\((E)\)] $0<r\leq\sigma\leq\sigma_0$, where \(\sigma_0\) is the
    unique positive number satisfying $H\sigma_0e^{1+\delta\sigma_0}=1$.
\end{description}
Then equation \eqref{eq:scalar_dde} admits a monotone heteroclinic solution
\(n_*:\mathbb R\to\mathbb R\) satisfying
\[
    \lim_{t\to-\infty}n_*(t)=0,
    \qquad
    \lim_{t\to+\infty}n_*(t)=\kappa.
\]
\end{theorem}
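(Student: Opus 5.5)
We split according to the two alternatives. Under \((D)\) nothing new is needed: the hypotheses are those of the distinct-delay heteroclinic theorem of \cite{GomezGuayasamin2025}, namely \eqref{eq:main_scalar_assumption}, \(\sigma\le\sigma_0\) and \(H>\rho e^{2\lambda(\sigma-r)}\). That theorem provides upper and lower solutions \(\overline\phi\ge\underline\phi\) and a monotone iteration, whose limit is a monotone solution \(n_*\) with \(n_*(-\infty)=0\) and \(n_*(+\infty)=\kappa\). The whole effort therefore goes into \((E)\), \(0<r\le\sigma\le\sigma_0\). There we reuse the monotone iteration framework of \cite{GomezGuayasamin2025}, which does not impose a derivative junction condition, and only have to produce a new admissible pair of upper and lower solutions.

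\textbf{Preliminary steps.} Rewrite \eqref{eq:scalar_dde} as \(n'=-\mu n+F(n_t)\), where
\[
F(\psi)=(\mu-\delta)\psi(0)-H\psi(-\sigma)+\rho\psi(-r)e^{-\psi(-r)},
\]
with \(\mu\) large. The condition \(\sigma\le\sigma_0\), i.e. \(H\sigma e^{1+\delta\sigma}\le1\), is what gives the exponential-type quasimonotonicity needed to absorb the negative delayed term \(-H\psi(-\sigma)\); this part is identical to case \((D)\). Next, study the characteristic function
\[
\Delta(z)=-z-\delta-He^{-\sigma z}+\rho e^{-rz}.
\]
Since \(\rho>\delta+H\), we have \(\Delta(0)>0\), and \(\Delta(z)\to-\infty\) as \(z\to+\infty\), so there is a positive root \(\lambda\). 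Uniqueness and simplicity of this root should again follow from \(\sigma\le\sigma_0\), and we need \(\Delta(z)>0\) on \((0,\lambda)\). Because \(\kappa\le1\), the map \(u\mapsto ue^{-u}\) is increasing on \([0,\kappa]\), which keeps the recruitment term monotone on the relevant range.

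\textbf{Upper solution.} Take \(\overline\phi(t)=\min\{\kappa,\ e^{\lambda t}\}\), possibly shifted or with an extra factor as in \cite{GomezGuayasamin2025}, and check the inequality by cases according to where \(t\), \(t-r\) and \(t-\sigma\) fall relative to the kink. The ordering \(r\le\sigma\) changes which delayed argument sits on which branch, so these cases must be redone, but each one is elementary using \(ue^{-u}\le u\) and \(\Delta(\lambda)=0\).

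\textbf{Lower solution (main obstacle).} As explained in the introduction, a lower solution that vanishes identically for \(t>t_0\) fails on \(t_0+r\le t<t_0+\sigma\). We therefore take
\[
\underline\varphi(t)=
\begin{cases}
\alpha\bigl(1-Me^{\varepsilon t}\bigr)e^{\lambda t}, & t\le 0,\\
\beta e^{-\nu t}, & t>0,
\end{cases}
\]
with \(0<\varepsilon<\lambda\) small, and \(\alpha,\beta,\nu>0\) chosen so that the function is continuous at \(0\) and the derivative jumps downward there. On \(t\le0\), the usual computation gives a leading term \(-\alpha M\Delta(\lambda+\varepsilon)e^{(\lambda+\varepsilon)t}>0\), while the nonlinearity error is \(O(\alpha^2e^{2\lambda t})\). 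This is favourable if \(\varepsilon<\lambda\) and \(\alpha\) is small. On \(t>0\), the term \(-H\underline\varphi(t-\sigma)\) has to be dominated by \(\rho\underline\varphi(t-r)e^{-\underline\varphi(t-r)}\) together with \(\nu\underline\varphi(t)-\delta\underline\varphi(t)\). Since \(r\le\sigma\), the recruitment argument is the more recent one, hence larger on the decaying tail. Choosing \(\nu\) large helps, because \(\underline\varphi'=-\nu\underline\varphi\) contributes positively, but \(e^{\nu\sigma}\) then enlarges the harvesting term.

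The delicate window is \(0<t<\sigma\), where \(t-r\) and \(t-\sigma\) may lie on different branches. I expect to handle it with the smallness of \(\alpha\) and the inequality \(H\sigma e^{1+\delta\sigma}\le1\), which should be exactly what makes a suitable \(\nu\) exist. One also needs \(\underline\varphi\le\overline\phi\) and \(\underline\varphi\not\equiv0\).

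\textbf{Conclusion.} The iteration then yields a monotone solution \(n_*\) with \(\underline\varphi\le n_*\le\overline\phi\). Hence \(n_*(-\infty)=0\). Monotonicity and boundedness give a limit at \(+\infty\), which must be an equilibrium. It is positive because \(n_*\) is nondecreasing and positive, so it equals \(\kappa\).
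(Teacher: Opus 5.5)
Your reduction of $(D)$ to the earlier paper is not justified as stated. The distinct-delay theorem you invoke is formulated under $H\ge\rho e^{\lambda(\sigma-r)}$, not under $H>\rho e^{2\lambda(\sigma-r)}$. That original condition can never hold, because $\Delta(\lambda)=0$ gives $\rho e^{-\lambda r}=\lambda+\delta+He^{-\lambda\sigma}>He^{-\lambda\sigma}$, i.e. $H<\rho e^{\lambda(\sigma-r)}$. So something new is needed: you must check that the hypothesis in $(D)$ supplies what the lower-solution construction actually uses. That requirement is some $\varepsilon\in(0,\lambda)$ with $He^{-(\lambda+\varepsilon)\sigma}\ge\rho e^{-(\lambda+\varepsilon)r}$. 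With $d=r-\sigma>0$ one has $\rho e^{-2\lambda d}<H<\rho e^{-\lambda d}$, so every $\varepsilon$ with $\frac1d\log\frac{\rho}{H}-\lambda<\varepsilon<\lambda$ works. This short step is exactly what the paper adds in case $(D)$.

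In case $(E)$ your upper solution fails. Take $\overline\phi=\min\{\kappa,e^{\lambda t}\}$, with kink at $t_1=\lambda^{-1}\ln\kappa$. Just to the right of $t_1$ the derivative vanishes while $\overline\phi(t-\sigma)<\kappa$, so the harvesting term is not at full strength. The bound $\rho\,\overline\phi(t-r)e^{-\overline\phi(t-r)}\le(\delta+H)\kappa$ then only gives residual $\le H(\kappa-\overline\phi(t-\sigma))$, and in fact the residual is positive there:
\[
-\overline\phi'(t_1^+)-\delta\kappa-H\overline\phi(t_1-\sigma)+\rho\,\overline\phi(t_1-r)e^{-\overline\phi(t_1-r)}
=\kappa\bigl[-\delta-He^{-\lambda\sigma}+\rho e^{-\lambda r}e^{-\kappa e^{-\lambda r}}\bigr].
\]
For $\rho=6$, $\delta=1$, $H=2$, $r=0.1$, $\sigma=0.15$ (a case covered by $(E)$) this is about $0.34\,\kappa>0$. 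Shifting, or replacing $e^{\lambda t}$ by $Ke^{\lambda t}$, only translates the residual, so it does not help. Moreover $\overline\phi\notin\Gamma$: $e^{\mu t}[\overline\phi(t+s)-\overline\phi(t)]$ is positive for $t<t_1$ and vanishes for $t\ge t_1$.

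The paper instead uses the $C^1$ profile equal to $\frac{\kappa\mu}{\mu+\lambda}e^{\lambda t}$ for $t\le0$ and to $\kappa\bigl(1-\frac{\lambda}{\mu+\lambda}e^{-\mu t}\bigr)$ for $t>0$, with $\mu=\frac1\sigma+\delta$. For $t\ge\sigma$, where $r\le\sigma$ puts $t-r$ on the right branch, the residual is bounded by $-\frac{\kappa\lambda}{\mu+\lambda}e^{-\mu t}\bigl(\mu-\delta-He^{\mu\sigma}\bigr)\le0$. On $(0,\sigma]$ it is bounded by an increasing function whose value at $\sigma$ is $\le0$. This is where $\sigma\le\sigma_0$ really enters, through $\mu-\delta-He^{\mu\sigma}=\sigma^{-1}(1-H\sigma e^{1+\delta\sigma})\ge0$. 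In particular $\mu$ must lie in this window; it cannot be taken large, as in your preliminary step, since $He^{\mu\sigma}$ then dominates.

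Your lower solution coincides with the paper's (your $M$, $\lambda+\varepsilon$, $-\nu$ are its $\theta$, $\nu$, $\eta$), but the window $0<t<\sigma$ is left open. It is closed as follows. First take any tail rate with $\Delta(-\nu)>0$: small $\nu$ suffices since $\Delta(0)=\rho-\delta-H>0$, and $\nu=\mu$ also works, whereas large $\nu$ fails when $r<\sigma$. Then, using $\Delta(\lambda)=0$, the linear part of the residual divided by $\alpha$ is at least $e^{-\nu r}(\lambda+\nu)-O(M)$ on $(0,r)$ and $\Delta(-\nu)e^{-\nu\sigma}-O(M)$ on $[r,\sigma)$. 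So $M$ must be fixed small before $\alpha$ is. You must also verify the exponential ordering, namely that $e^{\mu t}(\overline\varphi-\underline\varphi)$ is nondecreasing, not only $\underline\varphi\le\overline\varphi$.
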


\begin{proof}
We consider the two alternatives separately.

Assume first that \((D)\) holds. We use the upper--lower solution
construction developed in the proof of Theorem~1.1 in
\cite{GomezGuayasamin2025}. The only additional point is to verify the
existence of the parameter \(\varepsilon\in(0,\lambda)\) required in
the construction of the lower solution.

Set $d:=r-\sigma>0$. Since \(\lambda>0\) satisfies
\[
    -\lambda-\delta-He^{-\sigma\lambda}
    +\rho e^{-r\lambda}=0,
\]
we have
\[
    \rho e^{-r\lambda}
    =
    \lambda+\delta+He^{-\sigma\lambda}
    >
    He^{-\sigma\lambda},
\]
and hence
\[
    H<\rho e^{-\lambda d}.
\]
On the other hand, the last condition in \((D)\) is equivalent to
\[
    H>\rho e^{-2\lambda d}.
\]
Therefore
\[
    0<
    \frac{1}{d}\log\left(\frac{\rho}{H}\right)-\lambda
    <\lambda.
\]
We may thus choose \(\varepsilon\) such that
\[
    \frac{1}{d}\log\left(\frac{\rho}{H}\right)-\lambda
    <\varepsilon<\lambda.
\]
This choice gives $H>\rho e^{-(\lambda+\varepsilon)d}$, or, equivalently,
\[
    He^{-(\lambda+\varepsilon)\sigma}
    >
    \rho e^{-(\lambda+\varepsilon)r}.
\]
Thus the condition required in the lower-solution construction is
satisfied. The remaining upper--lower solution and compatibility
arguments in \cite{GomezGuayasamin2025} apply without change.
Consequently, Theorem~2.1 therein yields a monotone heteroclinic
solution of \eqref{eq:scalar_dde} connecting \(0\) to \(\kappa\).

Assume now that \((E)\) holds. This case is established in Appendix
\ref{app:heteroclinic_r_le_sigma}; more precisely, it follows from
Theorem \ref{theorem:r_le_sigma}.
\end{proof}

\begin{remark} The preceding argument also revisits the explicit parameter condition \[ H\geq\rho e^{\lambda(\sigma-r)} \] appearing in Theorem~1.1 of \cite{GomezGuayasamin2025}. Indeed, the characteristic identity satisfied by \(\lambda\) gives \[ H<\rho e^{\lambda(\sigma-r)}, \] so the comparison involving \(\lambda\) alone does not provide an admissible parameter range. The lower-solution construction itself requires the existence of some \(0<\varepsilon<\lambda\) such that \[ He^{-(\lambda+\varepsilon)\sigma} \geq \rho e^{-(\lambda+\varepsilon)r}. \] As shown in the proof above, the adjusted condition $ H>\rho e^{2\lambda(\sigma-r)} $ guarantees this requirement. 
With this modification, the lower-solution construction and hence the existence 
argument for \(0<\sigma<r\) in \cite{GomezGuayasamin2025} remain valid. 
\end{remark}

\begin{remark}
The endpoint \(r=\sigma\) is not obtained by simply taking the limit
\(\sigma\to r\) in the distinct-delay theorem. Indeed, the condition $H> \rho e^{2\lambda(\sigma-r)}$ would formally become \(H> \rho\) when \(\sigma=r\), which is incompatible
with \(\rho>\delta+H>H\). It is instead included in the unified construction
for \(0<r\leq\sigma\) given in Appendix
\ref{app:heteroclinic_r_le_sigma}.
\end{remark}

For the rest of the paper, we assume that either \((D)\) or \((E)\) in
Theorem \ref{prop:scalar_profiles} holds. Thus, equation
\eqref{eq:scalar_dde} has a monotone heteroclinic solution
\(n_*:\mathbb R\to\mathbb R\) satisfying
\[
    n_*(-\infty)=0,
    \qquad
    n_*(+\infty)=\kappa.
\]
This scalar orbit is the heteroclinic connection that will be embedded into the associated two-dimensional delay system in the next section.

\section{Embedding into the Faria--Huang--Wu framework}
\label{sec:fhw_embedding}

In this section we  introduce a two-dimensional auxiliary reaction-diffusion system that allows to include  the equation \eqref{eq:nicholson_diffusive} into the Faria-Huang-Wu framework developed in \cite{FariaHuangWu2006}. In order to make the reading self-contained we state partially the  Theorem 1.1 in \cite{FariaHuangWu2006}, according to our context:

\begin{theorem}
\label{thm:fhw}
Consider the delayed reaction--diffusion equation with nonlocal response
\begin{equation}\label{eq:fhw_general_rd}
    u_t(x,t)
    =
    D\Delta u(x,t)
    +
    F\left(
    u(x,t),
    \int_{-r}^{0}\int_{\Omega}
    d\eta(\theta)d\mu(y)\,
    g(u(x+y,t+\theta))
    \right),
\end{equation}
where \(D=\operatorname{diag}(d_1,\ldots,d_n)\), \(d_i>0\), and where
\(F\) and \(g\) are \(C^k\)-smooth, \(k\ge2\). Let the associated delay
differential equation be
\begin{equation}\label{eq:fhw_associated_dde}
    \dot u(t)
    =
    F\left(
    u(t),
    \int_{-r}^{0}
    d\eta(\theta)\,\mu_{\Omega}\,g(u(t+\theta))
    \right),
\end{equation}
where
\[
    \mu_{\Omega}:=\int_{\Omega}d\mu.
\]
Suppose that \eqref{eq:fhw_associated_dde} has two equilibria \(E_1,E_2\)
and that the following hypotheses hold:

\begin{description}
    \item[\((H1)\)]
    All characteristic roots associated with the equilibrium \(E_2\) have
    negative real parts.

    \item[\((H2)\)]
    The equilibrium \(E_1\) is hyperbolic and its unstable manifold is
    \(M\)-dimensional, with \(M\ge1\).

    \item[\((H3)\)]
    Equation \eqref{eq:fhw_associated_dde} has a heteroclinic solution
    \(u^*:\mathbb R\to\mathbb R^n\) connecting \(E_1\) to \(E_2\), that is,
    \[
        \lim_{t\to-\infty}u^*(t)=E_1,
        \qquad
        \lim_{t\to+\infty}u^*(t)=E_2.
    \]

    \item[\((H4)\)]
    The spatial measure has finite first moment:
    \[
        \left\|
        \int_{\Omega}d|\mu|(y)\,|y|
        \right\|_{\mathbb R^{n\times n}}
        <\infty.
    \]
\end{description}

Then there exists \(c^*>0\) such that, for every unit vector
\(\nu\in\mathbb R^m\) and every \(c>c^*\), equation
\eqref{eq:fhw_general_rd} admits a traveling wave solution
\[
    u(x,t)=U(\nu\cdot x+ct)
\]
connecting \(E_1\) to \(E_2\), namely
\[
    \lim_{s\to-\infty}U(s)=E_1,
    \qquad
    \lim_{s\to+\infty}U(s)=E_2.
\]
\end{theorem}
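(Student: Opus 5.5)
Theorem~\ref{thm:fhw} is quoted from \cite{FariaHuangWu2006}, and in the paper it is used as a black box. If I had to reprove it, I would follow the singular-perturbation and Lyapunov--Schmidt strategy of Faria, Huang and Wu.

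\textbf{Step 1: rescale.} Write \(s=\nu\cdot x+ct\), put \(\xi=s/c\) and \(\varepsilon=1/c\). Since \(u(x+y,t+\theta)=U(\nu\cdot x+\nu\cdot y+ct+c\theta)\), the profile equation in the variable \(\xi\) becomes
\[
\varepsilon^{2}D\,U''(\xi)-U'(\xi)+F\Big(U(\xi),\int_{-r}^{0}\int_{\Omega}d\eta(\theta)\,d\mu(y)\,g\big(U(\xi+\theta+\varepsilon\,\nu\cdot y)\big)\Big)=0 .
\]
At \(\varepsilon=0\) this is exactly the delay equation \eqref{eq:fhw_associated_dde}, so \(u^*\) from \((H3)\) solves the limiting problem. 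The aim is to continue \(u^*\) to small \(\varepsilon>0\), i.e.\ to large \(c\).

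\textbf{Step 2: remove the second-order term.} I would write the equation as \(\varepsilon^{2}D U''-U'=-\mathcal G_\varepsilon(U)\). For each component, \(\varepsilon^2 d_i\lambda^2-\lambda\) has roots \(0\) and \(1/(\varepsilon^2 d_i)\), which gives an explicit Green kernel. Among bounded solutions (modulo constants), this yields a fixed-point formulation in which the inverse of \(\varepsilon^2D\partial^2-\partial\) acts on \(\mathcal G_\varepsilon(U)\) like a mollified integration that tends to \(-\partial^{-1}\) as \(\varepsilon\to0\).

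\textbf{Step 3: linearise at \(u^*\).} Set \(U=u^*+w\). The linearised operator at \(\varepsilon=0\) is \(Lw=w'-D_1F\,w-D_2F\!\int d\eta\,\mu_\Omega\,Dg(u^*)\,w(\cdot+\theta)\), a linear delay operator with asymptotically constant coefficients.

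\textbf{Step 4: Fredholm property.} The hypotheses \((H1)\) and \((H2)\) give hyperbolicity of the limiting equations at \(\xi=\pm\infty\). By exponential dichotomies for functional differential equations, \(L\) is then Fredholm on \(BC^1(\mathbb R,\mathbb R^n)\to BC(\mathbb R,\mathbb R^n)\). Its kernel contains \(u^{*\prime}\), and the index is determined by the \(M\)-dimensional unstable manifold at \(E_1\) versus the trivial unstable space at \(E_2\). Since \(M\ge1\), one can choose a closed complement of the kernel on which \(L\) has a bounded right inverse.

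\textbf{Step 5: phase condition and Lyapunov--Schmidt.} I would impose a phase condition, e.g.\ orthogonality to \(u^{*\prime}\), to remove translation invariance. Lyapunov--Schmidt reduction then turns the full problem into a fixed-point equation \(w=\mathcal K_\varepsilon(w)\). The map \(\mathcal K_\varepsilon\) should be a contraction on a small ball for \(0<\varepsilon<\varepsilon^*\), which gives \(c^*=1/\varepsilon^*\).

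\textbf{Step 6: verify the small-\(\varepsilon\) estimates.} The shift \(\varepsilon\,\nu\cdot y\) inside \(g\) is controlled by the mean value theorem and the finite first moment \((H4)\). This gives \(\|\mathcal G_\varepsilon(U)-\mathcal G_0(U)\|\le C\varepsilon\|U'\|\int|y|\,d|\mu|\). Combined with \(C^2\)-smoothness of \(F\) and \(g\), this makes the nonlinear remainders quadratically small.

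\textbf{Step 7: limits at \(\pm\infty\).} The solution \(U=u^*+w\) with \(w\) small and bounded still has limits \(E_1,E_2\). This follows because the fixed point can be taken in a space of functions with exponential weights, or decaying at \(\pm\infty\), built from the dichotomy rates.

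\textbf{Main obstacle.} The hard part is the singular nature of the perturbation. The term \(\varepsilon^2DU''\) is not a bounded perturbation of \(L\), so the uniform-in-\(\varepsilon\) invertibility of the linearised operator cannot come from a naive Neumann series. My first attempt would be Step 2's Green-function reformulation. It converts the problem to an integral equation whose operator differs from the \(\varepsilon=0\) one by a term that is small in operator norm from \(BC^1\) to \(BC\). With that in hand, the implicit function theorem can be applied uniformly. Checking this smallness, together with the Fredholm and surjectivity properties of \(L\) under \((H1)\)--\((H2)\), is where I expect most of the work to lie.
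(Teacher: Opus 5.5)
The paper gives no proof of this statement: it is quoted from Faria--Huang--Wu and used as a black box, as you note. Your outline is the strategy of that reference and is correct in outline: rescale by $\varepsilon=1/c$, absorb the singular term $\varepsilon^{2}DU''$ through its Green kernel so that the perturbation is $O(\varepsilon)$ from $BC^1$ to $BC$, use Fredholm theory for the linearization at $u^*$ from $(H1)$--$(H2)$, control the shift $\varepsilon\,\nu\cdot y$ by $(H4)$, and apply the implicit function theorem.

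Two small corrections. First, the bounded right inverse comes from surjectivity of $L$, not from $M\ge1$: since $E_2$ is asymptotically stable for a retarded equation, the cokernel is trivial and $\dim\ker L=M$. Second, a phase condition orthogonal to $u^{*\prime}$ alone suffices only when $M=1$; in general you must fix the whole $M$-dimensional kernel, which is what produces the $M$-dimensional family of waves in the original theorem.
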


\begin{remark}
The full statement of Theorem 1.1 in \cite{FariaHuangWu2006} also describes
the local \(M\)-dimensional manifold of traveling waves near the heteroclinic
orbit \(u^*\), as well as smooth dependence on the wave speed. In the present
paper we only use the existence part stated above.
\end{remark}

We now embed the scalar diffusive Nicholson equation \eqref{eq:nicholson_diffusive} into a two-dimensional reaction--diffusion system fitting the abstract framework of Faria--Huang--Wu. The reason for introducing an auxiliary system is that the scalar equation contains two different delayed terms, namely \(n(t-r,x)\) and \(n(t-\sigma,x)\). The
Faria--Huang--Wu formulation is naturally written in terms of a vector-valued response variable; therefore, we separate the recruitment and harvesting delays through two components.

We consider the auxiliary system
\begin{equation}\label{eq:auxiliary_diffusive_system}
\begin{cases}
u_{1,t}(t,x)
=
u_{1,xx}(t,x)
-\delta u_1(t,x)
+\rho u_1(t-r,x)e^{-u_1(t-r,x)}
-Hu_2(t-\sigma,x),
\\[2mm]
u_{2,t}(t,x)
=
u_{2,xx}(t,x)
-\delta u_2(t,x)
+\rho u_1(t-r,x)e^{-u_1(t-r,x)}
-Hu_2(t-\sigma,x).
\end{cases}
\end{equation}

Let $\tau=\max\{r,\sigma\}$ and
\[
    U(t,x)=
    \begin{pmatrix}
    u_1(t,x)\\
    u_2(t,x)
    \end{pmatrix}.
\]
Then \eqref{eq:auxiliary_diffusive_system} can be written as
\begin{equation}\label{eq:abstract_rd_form}
    U_t(t,x)
    =
    D U_{xx}(t,x)
    +
    F\left(
    U(t,x),
    \int_{-\tau}^{0}\int_{\Omega}
    \dd\eta(\theta)\dd\mu(y)\,
    g(U(t+\theta,x+y))
    \right),
\end{equation}
where
\[
    D=I_2,
    \qquad
    g(U)=U.
\]

There is no spatial nonlocality in the present model. We therefore take
\(\Omega=\mathbb R\) and choose the matrix-valued spatial measure
\[
    d\mu(y)=I_2\,\delta_0(dy),
\]
where \(\delta_0\) denotes the unit Dirac measure concentrated at
\(y=0\). Consequently,
\[
    \mu_\Omega
    =
    \int_{\Omega}d\mu(y)
    =
    I_2,
\]
and
\[
    \int_{\Omega}d\mu(y)\,
    g\bigl(U(t+\theta,x+y)\bigr)
    =
    g\bigl(U(t+\theta,x)\bigr).
\]
Moreover,
\begin{equation}\label{H4verificacion}
    \left\|
        \int_{\Omega}d|\mu|(y)\,|y|
    \right\|_{\mathbb R^{2\times2}}
    =0.
\end{equation}
Hence hypothesis \((H4)\) of Theorem \ref{thm:fhw} is automatically
satisfied.

Defining the following  matrix-valued Stieltjes measure
\[
    \dd\eta(\theta)
    =\begin{pmatrix}
    \delta_{-r}(\dd\theta) & 0\\
    0 & \delta_{-\sigma}(\dd\theta)
    \end{pmatrix},
\]
we get 
\[
    \int_{-\tau}^{0} \dd\eta(\theta)\,g(U(t+\theta,x))
    = \begin{pmatrix}
    u_1(t-r,x)\\
    u_2(t-\sigma,x)
    \end{pmatrix}.
\]
Finally, for
\[
    U=
    \begin{pmatrix}
    u_1\\
    u_2
    \end{pmatrix},
    \qquad
    V=
    \begin{pmatrix}
    v_1\\
    v_2
    \end{pmatrix},
\]
we set
\[
    F(U,V)
    =
    \begin{pmatrix}
    -\delta u_1+\rho v_1e^{-v_1}-H v_2\\[1mm]
    -\delta u_2+\rho v_1e^{-v_1}-H v_2
    \end{pmatrix}.
\]
With these definitions, \eqref{eq:abstract_rd_form} is precisely
\eqref{eq:auxiliary_diffusive_system}.

We will show that the diagonal subspace
\[
    \Delta=\{(u_1,u_2)\in\mathbb R^2:\ u_1=u_2\}
\]
is invariant under \eqref{eq:auxiliary_diffusive_system}. Moreover, the
restriction of \eqref{eq:auxiliary_diffusive_system} to \(\Delta\) coincides
with the scalar diffusive Nicholson equation \eqref{eq:nicholson_diffusive}.

\begin{lemma}\label{lem:diagonal_invariant}
The diagonal \(\Delta\) is invariant for
\eqref{eq:auxiliary_diffusive_system}. Moreover, if
\[
    u_1(t,x)=u_2(t,x)=n(t,x),
\]
then \(n(t,x)\) satisfies \eqref{eq:nicholson_diffusive}.
\end{lemma}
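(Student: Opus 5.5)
The key observation is that the two equations in \eqref{eq:auxiliary_diffusive_system} have identical reaction terms $\rho u_1(t-r,x)e^{-u_1(t-r,x)}-Hu_2(t-\sigma,x)$. They differ only in the linear parts $u_{i,xx}-\delta u_i$. I will therefore pass to the difference $w:=u_1-u_2$, which satisfies the linear, undelayed heat equation
\[
    w_t=w_{xx}-\delta w .
\]
Invariance of $\Delta$ then reduces to uniqueness for this equation with zero initial data. The identification with \eqref{eq:nicholson_diffusive} is a direct substitution.

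\textbf{Step 1: invariance.} Take a solution of \eqref{eq:auxiliary_diffusive_system} whose initial history on $[-\tau,0]$ lies in $\Delta$, that is, $u_1(\theta,\cdot)=u_2(\theta,\cdot)$ for $\theta\in[-\tau,0]$. Then $w(0,\cdot)=0$. Since the equation for $w$ involves no delayed terms, the history of $w$ before time $0$ plays no role. Uniqueness for the linear heat equation with damping, in the class of bounded (or suitably growth-controlled) solutions in which the Faria--Huang--Wu semiflow is posed, gives $w\equiv0$ for all $t\ge0$. Concretely, $e^{\delta t}w$ solves the heat equation with zero data, so the Tychonoff-class uniqueness theorem or the maximum principle applies. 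Hence $u_1(t,x)=u_2(t,x)$ for all $t\ge0$, which is the invariance of $\Delta$.

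\textbf{Step 2: the reduced equation.} Set $u_1=u_2=n$ in either equation. The first equation becomes
\[
    n_t=n_{xx}-\delta n+\rho n(t-r,x)e^{-n(t-r,x)}-Hn(t-\sigma,x),
\]
which is exactly \eqref{eq:nicholson_diffusive}. Conversely, if $n$ solves \eqref{eq:nicholson_diffusive}, then the pair $(n,n)$ solves both equations of \eqref{eq:auxiliary_diffusive_system}. This also provides the diagonal solution needed later.

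\textbf{Main obstacle.} The only delicate point is Step 1, specifically the function class in which uniqueness holds. I would state the lemma for solutions bounded on bounded time intervals, which is the phase-space setting of \cite{FariaHuangWu2006}, and invoke the standard uniqueness result there. For traveling-wave profiles, which is what is actually used later, the analogous statement is the ODE $c\psi'=\psi''-\delta\psi$ for the difference $\psi$. That is handled separately in the next section by exploiting boundedness of $\psi$ on $\mathbb R$.
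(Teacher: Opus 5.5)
Your proof is correct and follows the paper's argument. You subtract the two equations so that the delayed reaction terms cancel, leaving $w_t=w_{xx}-\delta w$; you conclude $w\equiv0$ from zero initial data; and you substitute $u_1=u_2=n$ into the first equation. Your remark that uniqueness must be taken in a bounded (or Tychonoff-growth) class, via $e^{\delta t}w$ solving the heat equation, makes explicit a point the paper leaves implicit.
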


\begin{proof}
Let
\(
    z(t,x)=u_1(t,x)-u_2(t,x).
\)
Subtracting the second equation in \eqref{eq:auxiliary_diffusive_system} from
the first one gives
\[
    z_t(t,x)=z_{xx}(t,x)-\delta z(t,x).
\]
Thus, if \(z\equiv0\) initially, then \(z\equiv0\) for all later times. Hence the
diagonal is invariant. On the diagonal, both components are equal to some
function \(n(t,x)\), and the first equation in
\eqref{eq:auxiliary_diffusive_system} becomes exactly
\eqref{eq:nicholson_diffusive}.
\end{proof}

The delay differential system associated with
\eqref{eq:auxiliary_diffusive_system} is
\begin{equation}\label{eq:associated_dde_system}
\begin{cases}
u_1'(t)
=
-\delta u_1(t)
+\rho u_1(t-r)e^{-u_1(t-r)}
-Hu_2(t-\sigma),
\\[2mm]
u_2'(t)
=
-\delta u_2(t)
+\rho u_1(t-r)e^{-u_1(t-r)}
-Hu_2(t-\sigma),
\end{cases}
\end{equation}
and its relevant equilibria are
\(
    E_1=(0,0),\, E_2=(\kappa,\kappa),
\)
where $\kappa$ is given as in \eqref{equilibria}.

\begin{lemma}\label{lem:system_heteroclinic}
Let \(n_*(t)\) be the scalar heteroclinic solution given by Theorem
\ref{prop:scalar_profiles}. Then
\[
    U_*(t)
    =
    \begin{pmatrix}
    n_*(t)\\
    n_*(t)
    \end{pmatrix}
\]
is a heteroclinic solution of the system \eqref{eq:associated_dde_system}
connecting \(E_1=(0,0)\) to \(E_2=(\kappa,\kappa)\).
\end{lemma}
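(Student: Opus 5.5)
The argument is a direct substitution. I would first check that \(U_*\) solves the system, and then check that the limits hold componentwise.

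\emph{Step 1: the system is satisfied.} Put \(u_1=u_2=n_*\) into \eqref{eq:associated_dde_system}. Both equations then have the same right-hand side,
\[
    -\delta n_*(t)+\rho n_*(t-r)e^{-n_*(t-r)}-Hn_*(t-\sigma).
\]
By Theorem \ref{prop:scalar_profiles}, \(n_*\) solves \eqref{eq:scalar_dde}, so this expression equals \(n_*'(t)\) for every \(t\in\mathbb R\). Hence each component identity holds. This is the ODE counterpart of Lemma \ref{lem:diagonal_invariant}: the difference \(u_1-u_2\) satisfies \(z'=-\delta z\), and on the diagonal it vanishes identically.

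\emph{Step 2: regularity.} Theorem \ref{prop:scalar_profiles} gives \(n_*\) as a solution defined on all of \(\mathbb R\), so it is continuous. Because the right-hand side of \eqref{eq:scalar_dde} is continuous in \(t\), \(n_*\) is in fact \(C^1\). Consequently \(U_*\) is a classical solution on \(\mathbb R\).

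\emph{Step 3: the limits.} We have \(n_*(t)\to0\) as \(t\to-\infty\) and \(n_*(t)\to\kappa\) as \(t\to+\infty\). Since convergence in \(\mathbb R^2\) is componentwise, \(U_*(t)\to(0,0)=E_1\) and \(U_*(t)\to(\kappa,\kappa)=E_2\) respectively.

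\emph{Step 4: \(E_1\) and \(E_2\) are equilibria.} A constant \((a,a)\) is an equilibrium exactly when \(-(\delta+H)a+\rho ae^{-a}=0\). This holds for \(a=0\), and for \(a=\kappa\) because \(e^{-\kappa}=(\delta+H)/\rho\) by \eqref{equilibria}.

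No step presents a real obstacle. The only point that needs care is that \(n_*\) is a genuine solution on the whole line, which is exactly what Theorem \ref{prop:scalar_profiles} provides.
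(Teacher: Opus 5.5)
Your proposal is correct and follows the paper's proof. You substitute \(u_1=u_2=n_*\) so that both equations of \eqref{eq:associated_dde_system} reduce to \eqref{eq:scalar_dde}, and then you read off the limits componentwise. Your additional checks, the \(C^1\) regularity of \(n_*\) and the verification that \(E_1,E_2\) are equilibria, are valid details that the paper leaves implicit.
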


\begin{proof}
Since \(n_*(t)\) solves the scalar delay equation \eqref{eq:scalar_dde},
substitution of
\(
    u_1(t)=u_2(t)=n_*(t)
\)
into \eqref{eq:associated_dde_system} shows that both equations reduce to
\eqref{eq:scalar_dde}. Moreover,
\[
    \lim_{t\to-\infty}U_*(t)=
    \begin{pmatrix}
    0\\0
    \end{pmatrix}
    =E_1,
    \qquad
    \lim_{t\to+\infty}U_*(t)=
    \begin{pmatrix}
    \kappa\\ \kappa
    \end{pmatrix}
    =E_2.
\]
Thus \(U_*\) is the required heteroclinic solution.
\end{proof}

\section{Traveling waves and the diagonal reduction}
\label{sec:traveling_waves_diagonal}

In the auxiliary system \eqref{eq:auxiliary_diffusive_system} the diagonal \(u_1=u_2\) reduces exactly to the original scalar equation. In this section we show that  any heteroclinic traveling wave of the
auxiliary system  \eqref{eq:profile_system}  connecting \(E_1=(0,0)\) to \(E_2=(\kappa,\kappa)\) lies on the diagonal. Hence the waves obtained through the auxiliary system correspond to waves of the original scalar model.

We now seek traveling wave solutions of the auxiliary system
\eqref{eq:auxiliary_diffusive_system} of the form $U(t,x)=\Phi(x+ct)$, where
\[
    \Phi(s)=
    \begin{pmatrix}
    \phi_1(s)\\
    \phi_2(s)
    \end{pmatrix},
    \qquad
    s=x+ct.
\]
Then $U_t=c\Phi'(s)$ and $U_{xx}=\Phi''(s)$. Moreover,
\[
    u_1(t-r,x)=\phi_1(s-cr) \quad
    \text{and} \quad
    u_2(t-\sigma,x)=\phi_2(s-c\sigma).
\]
Thus, the traveling wave profile satisfies
\begin{equation}\label{eq:profile_system}
\begin{cases}
c\phi_1'(s)
=
\phi_1''(s)
-\delta \phi_1(s)
+\rho\phi_1(s-cr)e^{-\phi_1(s-cr)}
-H\phi_2(s-c\sigma),
\\[2mm]
c\phi_2'(s)
=
\phi_2''(s)
-\delta \phi_2(s)
+\rho\phi_1(s-cr)e^{-\phi_1(s-cr)}
-H\phi_2(s-c\sigma).
\end{cases}
\end{equation}
A traveling wave connecting \(E_1\) to \(E_2\) satisfies
\[
    \lim_{s\to-\infty}\Phi(s)=E_1=(0,0),
    \qquad
    \lim_{s\to+\infty}\Phi(s)=E_2=(\kappa,\kappa).
\]

Following Faria--Huang--Wu \cite{FariaHuangWu2006}, we introduce the large-speed scaling
\(V(\tau)=\Phi(c\tau) \), \(  \varepsilon=\frac{1}{c^2} \) and then \eqref{eq:profile_system} becomes
\begin{equation}\label{eq:epsilon_profile_system}
\begin{cases}
v_1'(\tau)
=
\varepsilon v_1''(\tau)
-\delta v_1(\tau)
+\rho v_1(\tau-r)e^{-v_1(\tau-r)}
-Hv_2(\tau-\sigma),
\\[2mm]
v_2'(\tau)
=
\varepsilon v_2''(\tau)
-\delta v_2(\tau)
+\rho v_1(\tau-r)e^{-v_1(\tau-r)}
-Hv_2(\tau-\sigma).
\end{cases}
\end{equation}
Formally, as \(c\to+\infty\), one has \(\varepsilon\to0\), and
\eqref{eq:epsilon_profile_system} reduces to the associated delay system
\eqref{eq:associated_dde_system}. Thus the heteroclinic solution \(U_*\) given
by Lemma \ref{lem:system_heteroclinic} is the singular limit around which
large-speed traveling waves are constructed.

\begin{lemma}\label{lem:no_off_diagonal_waves}
Every traveling wave of the auxiliary system \eqref{eq:auxiliary_diffusive_system}
connecting \(E_1=(0,0)\) to \(E_2=(\kappa,\kappa)\) lies on the diagonal. More
precisely, if
\[
    \Phi(-\infty)=E_1,
    \qquad
    \Phi(+\infty)=E_2,
\]
then
\[
    \phi_1(s)=\phi_2(s),
    \qquad s\in\mathbb R.
\]
\end{lemma}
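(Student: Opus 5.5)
Set $w:=\phi_1-\phi_2$. Subtracting the second equation of \eqref{eq:profile_system} from the first cancels the delayed recruitment and harvesting terms, since they are identical in both components. This leaves the linear, constant-coefficient, delay-free ODE
\[
    w''(s)-cw'(s)-\delta w(s)=0,\qquad s\in\mathbb R,
\]
which is the profile analogue of the equation $z_t=z_{xx}-\delta z$ used in Lemma \ref{lem:diagonal_invariant}. Because $\Phi(\pm\infty)$ are both diagonal points, $w$ also satisfies
\[
    \lim_{s\to-\infty}w(s)=0-0=0,\qquad \lim_{s\to+\infty}w(s)=\kappa-\kappa=0.
\]
The plan is therefore to show that the only solution of this ODE vanishing at both ends is $w\equiv0$.

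The characteristic equation is $\mu^2-c\mu-\delta=0$. Its roots are
\[
    \mu_\pm=\frac{c\pm\sqrt{c^2+4\delta}}{2}.
\]
Since $\delta>0$, the product of the roots is $\mu_+\mu_-=-\delta<0$, so $\mu_-<0<\mu_+$. Every solution has the form $w(s)=Ae^{\mu_+s}+Be^{\mu_-s}$. As $s\to+\infty$, the term $Be^{\mu_-s}$ tends to $0$ while $Ae^{\mu_+s}$ is unbounded unless $A=0$; so $w(+\infty)=0$ forces $A=0$. As $s\to-\infty$, $e^{\mu_-s}\to+\infty$, so $w(-\infty)=0$ forces $B=0$. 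Hence $w\equiv0$, i.e.\ $\phi_1\equiv\phi_2$ on $\mathbb R$. The argument works for every $c$ (indeed for any real $c$), not only for large speeds, so it applies in particular to the waves produced by Theorem \ref{thm:fhw}.

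The step that needs care is regularity. A traveling wave here is understood as a $C^2$ (classical) solution of \eqref{eq:profile_system}, which is the solution class produced by Theorem \ref{thm:fhw}. Under that standing interpretation, $w$ is a $C^2$ solution of a linear homogeneous ODE with constant coefficients, so it must coincide with the general exponential formula above. If one only had weak or bounded solutions, one would first upgrade them to classical solutions by elliptic regularity of the ODE. No such upgrade is needed in our setting.

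Otherwise the argument is elementary. The structural reason it works is that the auxiliary system \eqref{eq:auxiliary_diffusive_system} was built with identical nonlinear right-hand sides in both components, so the difference $w$ decouples completely and is governed by a linear equation with damping $-\delta<0$.
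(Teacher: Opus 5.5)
Your proof is correct and follows the paper's argument: you subtract the profile equations to get $w''-cw'-\delta w=0$, use $\mu_-<0<\mu_+$, and let the two limits at $\pm\infty$ force both coefficients to vanish. Your sequential treatment of the limits (first $A=0$ from $s\to+\infty$, then $B=0$ from $s\to-\infty$) is slightly more precise than the paper's term-by-term phrasing.
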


\begin{proof}
Let
\[
    \psi(s)=\phi_1(s)-\phi_2(s).
\]
Subtracting the two equations in \eqref{eq:profile_system}, we obtain
\[
 \psi''(s)-c\psi'(s)-\delta\psi(s)=0.
\]
The associated characteristic equation is $ \mu^2-c\mu-\delta=0$, whose roots are
\[ \mu_{\pm} = \frac{c\pm\sqrt{c^2+4\delta}}{2}.
\]
Since \(\delta>0\), we have that    
\(
    \mu_-<0<\mu_+.
\)
Let
\[
    \psi(s)=A e^{\mu_+s}+B e^{\mu_-s}.
\]
The connection conditions give
\[
    \psi(-\infty)=0,
    \qquad
    \psi(+\infty)=0.
\]
The term \(A e^{\mu_+s}\) is bounded as \(s\to-\infty\), but it diverges as
\(s\to+\infty\) unless \(A=0\). Similarly, the term \(B e^{\mu_-s}\) is bounded
as \(s\to+\infty\), but diverges as \(s\to-\infty\) unless \(B=0\). Hence
\(A=B=0\), and therefore \(\psi\equiv0\). Thus \(\phi_1=\phi_2\).
\end{proof}

Consequently, if
\[
    \Phi(s)=
    \begin{pmatrix}
    \phi(s)\\
    \phi(s)
    \end{pmatrix},
\]
then \eqref{eq:profile_system} reduces to the scalar profile equation
\begin{equation*}\label{eq:scalar_profile}
    c\phi'(s)
    =
    \phi''(s)
    -\delta\phi(s)
    -H\phi(s-c\sigma)
    +\rho\phi(s-cr)e^{-\phi(s-cr)}.
\end{equation*}
Thus every heteroclinic traveling wave of the auxiliary system gives a traveling wave of the original scalar diffusive Nicholson equation with harvesting \eqref{eq:nicholson_diffusive}.

\section{Spectral analysis}
\label{sec:spectral_hypotheses}
To apply Theorem \ref{thm:fhw}, it remains to verify its spectral
hypotheses at the equilibria $E_1=(0,0)$ and $E_2=(\kappa,\kappa)$
of the associated delay system \eqref{eq:associated_dde_system}.
More precisely, hypothesis \((H1)\) requires the asymptotic stability of
\(E_2\), whereas hypothesis \((H2)\) requires \(E_1\) to be hyperbolic
with a nontrivial finite-dimensional unstable manifold.

The characteristic equations associated with the
linearized part of
\eqref{eq:associated_dde_system} around $E_1$ and $E_2$, respectively are:
\begin{equation*}
    (z+\delta) \chi_1(z)=0, \quad \text{and} \quad (z+\delta) \chi_2(z)=0,
\end{equation*}
where 
\[\begin{aligned}
    \chi_1(z)&=z+\delta+H e^{-\sigma z}-\rho e^{-rz},\\
    \chi_2(z)&=z+\delta+H e^{-\sigma z}
    -(\delta+H)(1-\kappa)e^{-r z}.
\end{aligned} \]

The factor \(z+\delta\) contributes only the strictly stable
characteristic root \(z=-\delta\). Therefore, the verification of the
hypotheses $(H1)$ and ($H_2$) reduces to two scalar spectral problems:
we must show that all the roots of \(\chi_2\) lie in the open left
half-plane, and that \(\chi_1\) has no roots on the imaginary axis and
exactly one root, counted with multiplicity, in the open right
half-plane.

The following lemma records the spectral conclusion needed for the
application of Theorem \ref{thm:fhw}. Its proof will follow from the
results established in the next two subsections.

\begin{lemma}
\label{lem:fhw_spectral_conditions}
Assume that $1<\frac{\rho}{\delta+H}\leq e$, and let \(\sigma_0>0\) be the unique number satisfying
\begin{equation}
\label{eq:sigma0_spectral}
H\sigma_0e^{1+\delta\sigma_0}=1.
\end{equation}
Suppose that
$$
    0\leq\sigma\leq\sigma_0,
    \qquad
    0\leq r<r_*,
$$
where \(r_*\) is defined in \eqref{rcrit}. Then hypotheses \((H1)\)
and \((H2)\) of Theorem \ref{thm:fhw} are satisfied for the associated
delay system \eqref{eq:associated_dde_system}. More precisely, all
characteristic roots associated with \(E_2=(\kappa,\kappa)\) have
negative real parts, whereas \(E_1=(0,0)\) is hyperbolic and its
unstable manifold is one-dimensional. In particular, \(M=1\).
\end{lemma}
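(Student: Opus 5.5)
The plan splits the statement into two scalar problems, one for \(\chi_2\) (giving \((H1)\)) and one for \(\chi_1\) (giving \((H2)\)). This is legitimate because the root \(z=-\delta\) of the factor \(z+\delta\) is already strictly stable. The threshold \(r_*\) of \eqref{rcrit} is presumably defined as the first delay at which a root of \(\chi_2\) reaches the imaginary axis, with \(\sigma\le\sigma_0\) fixed. In both problems the argument is a continuation in the delays combined with exclusion of roots on the imaginary axis. Roots of quasi-polynomials of retarded type depend continuously on the delays, and they can only enter the right half-plane by crossing \(i\mathbb R\), since no roots come in from infinity for retarded equations.

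\emph{Step 1: \((H2)\) at \(E_1\).} We have \(\chi_1(0)=\delta+H-\rho<0\) and \(\chi_1(x)\to+\infty\) as \(x\to+\infty\). For real \(x>0\), \(\chi_1'(x)=1-H\sigma e^{-\sigma x}+r\rho e^{-rx}\ge 1-H\sigma>0\), because \(H\sigma\le H\sigma_0<1\). So there is exactly one simple positive real root, namely the \(\lambda\) of Theorem \ref{thm:main_traveling_wave}.

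To exclude imaginary roots, write \(z=i\omega\) with \(\omega\neq0\). Taking real parts gives \(\delta+H\cos(\sigma\omega)=\rho\cos(r\omega)\), and the imaginary part gives \(\omega-H\sin\sigma\omega+\rho\sin r\omega=0\). I will use the modulus inequality \(|i\omega+\delta+He^{-i\sigma\omega}|=\rho\) together with the smallness of \(r\) and \(\sigma\) to derive a contradiction. If this fails, I will instead run the argument principle on a large right half-disc, or continue from \(r=\sigma=0\), where \(\chi_1(z)=z+\delta+H-\rho\) has the single root \(\rho-\delta-H>0\), as long as no root crosses \(i\mathbb R\). The condition \(r<r_*\) should be what keeps the excursion bounded.

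To show there are no other roots with \(\operatorname{Re}z\ge0\), compare \(\operatorname{Re}\) and \(\operatorname{Im}\). From \(\operatorname{Im}\chi_1=0\) we get \(|\omega|\le H\sigma|\omega|+\rho r|\omega|\), so nonreal roots require \(H\sigma+\rho r\ge1\). This suggests that \(r_*\) is built so that \(\rho r_*+H\sigma_0\le1\) or a similar bound. If \(r_*\) is defined otherwise, I will fall back on continuation in \(r\) from \(0\).

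\emph{Step 2: \((H1)\) at \(E_2\).} Set \(a=(\delta+H)(1-\kappa)\in[0,\delta+H)\), using \(0<\kappa\le1\). For \(\operatorname{Re} z\ge0\), the nonreal-root argument above shows that any such root with \(\omega\neq0\) needs \(H\sigma+ar\ge1\), and real roots \(x\ge0\) are excluded because \(\chi_2(x)\ge x+\delta+He^{-\sigma x}-a>0\) (note that \(He^{-\sigma x}\ge0\) and \(a<\delta+H\) suffice at \(x=0\), with monotonicity as in Step 1). For genuinely complex roots I will use the crossing analysis. At \(r=0\) the equation \(z+\delta-a+He^{-\sigma z}=0\) is stable for \(\sigma\le\sigma_0\), which follows from the classical result for \(z+p+qe^{-\sigma z}\) with \(p>0\) (or \(p\ge -q\)) and \(q\sigma\) small. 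Increasing \(r\), roots remain in the left half-plane until the first \(r\) at which \(\chi_2(i\omega)=0\), and that value is \(r_*\) by definition.

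\emph{Main obstacle.} The hard step will be excluding imaginary-axis roots of \(\chi_2\) with mixed delays; the two-delay crossing curves are not explicit. This is exactly why \(r_*\) is defined through \eqref{rcrit}, so the proof should reduce to checking that \eqref{rcrit} bounds the first crossing. I would first try the crude bound \(H\sigma+ar<1\), and only if \eqref{rcrit} is sharper go to the explicit crossing frequency \(\omega\).
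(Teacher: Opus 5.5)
There is a genuine gap, and it comes from a wrong guess about $r_*$. Your reduction via the factor $z+\delta$ and your count of real roots of $\chi_1$ are fine. However, in \eqref{rcrit} the threshold $r_*=\min_{\omega\in[\omega_-,\omega_+]}\bigl(2\pi-\alpha(\omega)-\beta_\rho(\omega)\bigr)/\omega$ is built from the imaginary-axis roots of $\chi_1$ (the $E_1$ problem), not of $\chi_2$.

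\textbf{Step 1 has no mechanism linking $r<r_*$ to $\chi_1$.} Nothing in your argument shows that $r<r_*$ keeps the roots of $\chi_1$ off $i\mathbb R$. The modulus identity $|i\omega+\delta+He^{-i\sigma\omega}|=\rho$ does not involve $r$, so smallness of $r$ cannot enter through it. Since $\rho>\delta+H$, it gives no sign contradiction either.

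Your fallback $H\sigma+\rho r<1$ is correctly derived from the imaginary part, but it is strictly weaker than the hypothesis. Since $\alpha+\beta_\rho<\pi$, one has $r_*>\pi/\omega_+>\pi/(\rho+H)>\pi/(2\rho)>1/\rho$. So the range $(1-H\sigma)/\rho\le r<r_*$ is never covered.

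The missing idea is a phase analysis:
\begin{itemize}
\item Write $\rho e^{-i\omega r}=R(\omega)e^{i\alpha(\omega)}+He^{-i\omega\sigma}$ with $R=\sqrt{\delta^2+\omega^2}$.
\item The triangle inequality forces $\omega\in[\omega_-,\omega_+]$.
\item The law of cosines gives $\omega r=-\alpha(\omega)+\varepsilon\beta_\rho(\omega)+2\pi n$ with $\varepsilon=\pm1$.
\item Since $0\le\beta_\rho<\alpha<\pi/2$ (this uses $\rho-\delta>H$), the requirement $r\ge0$ forces $n\ge1$. Hence $r\ge r_*$ on every critical curve, uniformly in $\sigma$.
\item Continue from $(r,\sigma)=(0,0)$, where the only root is $\rho-\delta-H>0$, across the strip $\{0\le r<r_*,\ \sigma\ge0\}$. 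This gives exactly one unstable root.
\end{itemize}

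\textbf{Step 2 has the converse problem.} Since $r_*$ does not describe crossings of $\chi_2$, your claim that its roots stay in the left half-plane until $r=r_*$ ``by definition'' is unsupported. The bound $H\sigma+ar<1$ again covers only part of the range.

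In fact $(H1)$ needs no restriction on $r$. The modulus argument you aimed at $\chi_1$ is exactly what works for $\chi_2$, because there $a=(\delta+H)(1-\kappa)<\delta+H$. If $\chi_2(i\omega)=0$ with $\omega>0$, then
\[
a^2-(\delta+H)^2=\omega^2-2H\delta\bigl(1-\cos\omega\sigma\bigr)-2H\omega\sin\omega\sigma\ge\omega^2\bigl[1-H\sigma(\delta\sigma+2)\bigr]>0.
\]
This uses $1-\cos x\le x^2/2$, $\sin x\le x$, and $H\sigma(\delta\sigma+2)<H\sigma e^{1+\delta\sigma}\le1$. The conclusion contradicts $a<\delta+H$. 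Also $\chi_2(0)=(\delta+H)\kappa>0$. So $\chi_2$ has no imaginary roots for any $r\ge0$ once $\sigma\le\sigma_0$.

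With this in hand, your continuation in $r$ would close Step 2. Your base case $r=0$ is indeed stable by the Hayes criterion, since $\delta-a+H=(\delta+H)\kappa>0$ and $H\sigma<1$. The paper instead continues in $\sigma$ from $\sigma=0$, where $|z+\delta+H|\ge\delta+H>a\ge ae^{-r\Re z}$ for $\Re z\ge0$ excludes unstable roots directly.
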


\subsection{Stability of the positive equilibrium}

We begin with the positive equilibrium \(E_2=(\kappa,\kappa)\).
At this equilibrium, the derivative of the Nicholson recruitment term is
$$
   q:= b'(\kappa)
    =
    \rho e^{-\kappa}(1-\kappa)
    =
    (\delta+H)(1-\kappa).
$$
Since \(0<\kappa\leq1\), this coefficient is nonnegative and strictly
smaller than \(\delta+H\). This strict inequality is the key observation
for controlling the characteristic roots at \(E_2\).

To verify hypothesis \((H1)\), we must prove that every root of
\(\chi_2\) has negative real part. The following proposition shows that
the upper bound \(\sigma\leq\sigma_0\), which already appears in the
construction of the scalar heteroclinic connection, is sufficient for
this purpose. 
\begin{proposition}
\label{prop:sigma0_implies_spectral_stability}
Let \(\sigma_0>0\) be the unique number satisfying \eqref{eq:sigma0_spectral}. Then, for every \(r\geq0\) and every $0\leq\sigma\leq\sigma_0$,
all the roots of the characteristic equation
\begin{equation}
\label{eq:char_positive_sigma0}
    \chi_2(z)
    =
    z+\delta
    +H e^{-\sigma z}
    -q e^{-r z}
    =0,
\end{equation}
have negative real parts.
\end{proposition}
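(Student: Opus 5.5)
The plan is a modulus comparison. Write $f(z):=z+\delta+He^{-\sigma z}$, so that $\chi_2(z)=f(z)-qe^{-rz}$. Since $0\le q=(\delta+H)(1-\kappa)<\delta+H$ and $|e^{-rz}|\le1$ whenever $\operatorname{Re}z\ge0$ and $r\ge0$, it suffices to show that
\[
|f(z)|\ \ge\ \delta+H\qquad\text{for all } \operatorname{Re}z\ge0 .
\]
Indeed, a root of $\chi_2$ with $\operatorname{Re}z\ge0$ would then satisfy $\delta+H\le|f(z)|=q|e^{-rz}|\le q<\delta+H$, which is impossible. This gives the conclusion uniformly in $r\ge0$, and the delay $r$ plays no role at all.

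\textbf{Step 1: the imaginary axis.} For $z=iy$ I compute
\[
|f(iy)|^2-(\delta+H)^2=y^2-2Hy\sin(\sigma y)-2\delta H\bigl(1-\cos(\sigma y)\bigr).
\]
Using $y\sin(\sigma y)\le\sigma y^2$ and $1-\cos(\sigma y)\le\sigma^2y^2/2$, the right-hand side is at least $y^2\bigl(1-H\sigma(2+\delta\sigma)\bigr)$. By \eqref{eq:sigma0_spectral} and $\sigma\le\sigma_0$, we have $H\sigma e^{1+\delta\sigma}\le1$, since $s\mapsto Hse^{1+\delta s}$ is increasing. Setting $x=\delta\sigma\ge0$, this gives
\[
H\sigma(2+\delta\sigma)\le(2+x)e^{-1-x}\le 2/e<1,
\]
because $(2+x)e^{-1-x}$ is decreasing. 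Hence $|f(iy)|\ge\delta+H$ on the imaginary axis, with strict inequality for $y\ne0$.

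\textbf{Step 2: $f$ has no zeros in $\operatorname{Re}z\ge0$.} I will use a continuation argument in the harvesting coefficient. Replace $H$ by $h\in[0,H]$ and let $f_h(z)=z+\delta+he^{-\sigma z}$. The constraint $h\sigma e^{1+\delta\sigma}\le1$ persists for all such $h$, so Step 1 gives $|f_h|\ge\delta+h>0$ on the imaginary axis. Moreover, $|f_h(z)|\ge|z|-\delta-H$ on $\operatorname{Re}z\ge0$, so zeros of $f_h$ in the closed right half-plane stay in a fixed compact set. Since zeros depend continuously on $h$ and cannot cross the imaginary axis, the number of zeros in $\operatorname{Re}z>0$ is constant in $h$. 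At $h=0$ the only zero is $-\delta$, so $f=f_H$ has no zeros with $\operatorname{Re}z\ge0$.

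\textbf{Step 3: maximum principle.} The function $g=(\delta+H)/f$ is then analytic on a neighbourhood of the closed right half-plane. It satisfies $|g|\le1$ on the imaginary axis and $g(z)\to0$ as $|z|\to\infty$ there. Applying the maximum modulus principle on half-discs of radius $R\to\infty$ gives $|g|\le1$, that is, $|f(z)|\ge\delta+H$ on $\operatorname{Re}z\ge0$. The contradiction described at the outset then finishes the proof.

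The main obstacle is Step 2, the exclusion of zeros of $f$ in the right half-plane. This is what makes the maximum-principle step legitimate. I would handle it by the homotopy in $h$ as above. An alternative is the observation that the condition $H\sigma e^{1+\delta\sigma}\le1$ is exactly the condition that $w+He^{\delta\sigma}e^{-\sigma w}=0$, with $w=z+\delta$, has a real dominant root, which lies in $(-\infty,0)$.
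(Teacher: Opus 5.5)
Your proof is correct, and it takes a genuinely different route from the paper's.

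\textbf{What is shared.} Your Step 1 is the paper's key estimate,
$|\delta+i\omega+He^{-i\omega\sigma}|^2-(\delta+H)^2\ge\omega^2\bigl[1-H\sigma(\delta\sigma+2)\bigr]$.
The paper uses it only to exclude purely imaginary roots of $\chi_2$, since $|qe^{-i\omega r}|=q<\delta+H$ there. It then treats $\sigma=0$ with exactly the modulus comparison you use at the end. Finally it concludes by continuation in $\sigma$ for fixed $r$, invoking the crossing result (Corollary~2.3) of Ruan--Wei.

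\textbf{What you do differently.} You upgrade the boundary inequality to the whole closed right half-plane: $|z+\delta+He^{-\sigma z}|\ge\delta+H$ for $\operatorname{Re}z\ge0$. First you exclude zeros of $f$ there by an argument-principle homotopy in the coefficient $h\in[0,H]$. This runs on a fixed half-disc whose boundary never meets a zero. Then you apply the maximum modulus principle to $(\delta+H)/f$.

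\textbf{What each route buys.} Your route is self-contained. The only continuation you need is in a coefficient over a compact region. You therefore avoid deforming from the delay-free case $\sigma=0$, where infinitely many roots enter from $\operatorname{Re}z=-\infty$; the paper delegates that step to the citation. Your route also makes the independence of $r$ structural rather than incidental. The same proof works for any delayed term $qe^{-rz}$ with $|q|<\delta+H$, and for any distributed delay of total variation less than $\delta+H$. The paper's route is shorter: it needs only the imaginary-axis inequality plus the external crossing lemma.
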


\begin{proof}  We first show that no root can lie on the imaginary axis for \(0<\sigma\leq\sigma_0\). Notice that 
\[ \chi_2(0) = \delta+H-q = (\delta+H)\kappa >0, \] 
so \(z=0\) is not a root. Suppose that \(z=i\omega\), with \(\omega>0\), is a root. Then \[ \delta+i\omega +He^{-i\omega\sigma} = qe^{-i\omega r}, \] and hence 
\[ \begin{aligned} q^2-(\delta+H)^2 &= \left| \delta+i\omega+He^{-i\omega\sigma} \right|^2 -(\delta+H)^2 \\ &= \omega^2 -2H\delta\bigl(1-\cos(\omega\sigma)\bigr) -2H\omega\sin(\omega\sigma). 
\end{aligned} \]
Using 
\[ 1-\cos x\leq\frac{x^2}{2}, \qquad \sin x\leq x, \qquad x\geq0, \] 
we obtain 
\[ \begin{aligned} 
q^2-(\delta+H)^2 &\geq \omega^2 \left[ 1-H\sigma(\delta\sigma+2) \right]. 
\end{aligned} \]
Since \(s\mapsto Hs e^{1+\delta s}\) is increasing and \(\sigma\leq\sigma_0\), \[ H\sigma e^{1+\delta\sigma}\leq1. \] Moreover, \[ e^{1+x}>x+2, \qquad x\geq0, \] and therefore, for \(\sigma>0\), \[ H\sigma(\delta\sigma+2) < H\sigma e^{1+\delta\sigma} \leq1. \] Thus \[ q^2-(\delta+H)^2>0, \] which contradicts \(q<\delta+H\). Hence \(\chi_2\) has no purely imaginary roots for \(0<\sigma\leq\sigma_0\). It remains to identify the stable side. When \(\sigma=0\), 
\[ \chi_2(z) =z+\delta+H-qe^{-r z}. \] 
If \(z=x+iy\) were a root with \(x\geq0\), then 
\[ |z+\delta+H| = qe^{-rx} \leq q, \] whereas 
\[ |z+\delta+H| \geq \delta+H > q, \] 
a contradiction. Thus all characteristic roots have negative real parts when \(\sigma=0\). For fixed \(r\geq0\), the number of characteristic roots in the open right half-plane can change as \(\sigma\) varies only when a root crosses the imaginary axis (see Corollary 2.3 in \cite{RuanWei2003}). Since no such crossing occurs for \(0\leq\sigma\leq\sigma_0\), all characteristic roots remain in the open left half-plane throughout this interval. \end{proof}

\begin{remark}
\label{rem:sigma0_spectral_quasimonotonicity}
The relevance of Proposition
\ref{prop:sigma0_implies_spectral_stability} is that the same upper bound on
the harvesting delay, $0 < \sigma\leq\sigma_0$,
plays two different roles in the analysis. On the one hand, it guarantees
the existence of \(\mu>0\) such that
\[
    \mu-\delta-He^{\mu\sigma}\geq0,
\]
which is the condition used in the exponential quasi-monotonicity argument.
On the other hand, it guarantees that all the roots of equation \eqref{eq:char_positive_sigma0} have a negative real part. Thus, within the parameter region already required for
the construction of the monotone heteroclinic connection, no additional
restriction on \(\sigma\) is needed in order to verify the spectral
stability assumption at \(E_2\).
\end{remark}

\subsection{Hyperbolicity of the zero equilibrium}
\label{subsec:hyperbolicity_zero}

We now turn to the zero equilibrium \(E_1=(0,0)\). It is straightforward to show that, under the condition $\rho>\delta+H$, the equation $\chi_1(z)=0$ has a unique real and positive root $\lambda$. Moreover, under the standing condition \(0\leq\sigma\leq\sigma_0\), this positive
zero is unique. In fact,
$$
    H\sigma
    <
    H\sigma e^{1+\delta\sigma}
    \leq1.
$$
Consequently, for every \(x\geq0\),
$$
\begin{aligned}
    \chi_1'(x)
    &=
    1-H\sigma e^{-\sigma x}
    +\rho r e^{-rx}
    \\
    &\geq
    1-H\sigma
    >0.
\end{aligned}
$$
Hence \(\chi_1\) is strictly increasing on \([0,+\infty)\) and has a
unique positive real root, which we denote by \(\lambda\).

The existence of this root shows that the unstable manifold of \(E_1\)
has dimension at least one. To verify hypothesis \((H2)\), however, we
must still exclude roots on the imaginary axis and show that no
additional roots lie in the open right half-plane. We accomplish this
in two steps. First, we parametrize all possible imaginary roots and
obtain the corresponding critical curves in the \((r,\sigma)\)-plane.
We then identify a connected region containing \((r,\sigma)=(0,0)\)
and separated from all these curves. Constancy of the unstable root
count in that region will imply that \(E_1\) has exactly one unstable
characteristic root.

\subsubsection{Critical curves}
\label{subsec:critical_curves_zero}

Possible changes in the number of unstable roots can occur only at
parameter values for which \(\chi_1\) has a root on the imaginary axis.
We therefore begin by determining all such parameter values. Let
\(z=i\omega\). Since
$$
    \chi_1(0)=\delta+H-\rho<0,
$$
the value \(\omega=0\) is not possible. Moreover, the coefficients are
real, so it is enough to consider \(\omega>0\).

The equation \(\chi_1(i\omega)=0\) can be written as
\[
\delta+i\omega
    =\rho e^{-i\omega r}-H e^{-i\omega\sigma}.
\]
Hence the triangle inequality gives
\[
    \rho-H
    \leq
    \sqrt{\delta^2+\omega^2}
    \leq
    \rho+H.
\]
Therefore purely imaginary roots can occur only for
\[
    \omega\in[\omega_-,\omega_+],
\]
where
\[
    \omega_-=
    \sqrt{(\rho-H)^2-\delta^2},
    \qquad
    \omega_+=
    \sqrt{(\rho+H)^2-\delta^2}.
\]
By \eqref{eq:main_scalar_assumption}, \(\rho>\delta+H\), and hence
\(\omega_->0\).

For \(\omega\in[\omega_-,\omega_+]\), set
\[
    R(\omega):=\sqrt{\delta^2+\omega^2},
    \qquad
    \alpha(\omega)
    :=
    \arctan\left(\frac{\omega}{\delta}\right),
\]
and define
\[
\begin{aligned}
    \beta_\rho(\omega)
    &:=
    \arccos\left(
        \frac{\rho^2-H^2+R(\omega)^2}
             {2\rho R(\omega)}
    \right),\\
    \beta_H(\omega)
    &:=
    \arccos\left(
        \frac{\rho^2-H^2-R(\omega)^2}
             {2H R(\omega)}
    \right).
\end{aligned}
\]
These angles arise from the vector identity
\[
    \rho e^{-i\omega r}
    =
    R(\omega)e^{i\alpha(\omega)}
    +
    H e^{-i\omega\sigma}.
\]
For each admissible frequency there are two possible orientations of
the corresponding triangle, which we denote by
\(\varepsilon\in\{-1,1\}\). Comparing arguments gives
\begin{equation}
\label{eq:critical_curves_zero}
    r(\omega)
    =
    \frac{-\alpha(\omega)
          +\varepsilon\beta_\rho(\omega)+2\pi n}{\omega},
    \qquad
    \sigma(\omega)
    =
    \frac{-\alpha(\omega)
          +\varepsilon\beta_H(\omega)+2\pi m}{\omega},
\end{equation}
where \(m,n\in\mathbb Z\). Only the portions satisfying
\(r,\sigma\geq0\) are admissible.

The following relations will be used repeatedly.

\begin{lemma}
\label{lem:angle_relations_zero}
For every \(\omega\in[\omega_-,\omega_+]\),
\[
    0\leq\beta_\rho(\omega)
    <
    \alpha(\omega)
    <
    \frac{\pi}{2}.
\]
Moreover, if
\[
    \gamma(\omega)
    :=
    \beta_H(\omega)-\beta_\rho(\omega),
\]
then
\[
    \gamma(\omega)
    =
    \arccos\left(
        \frac{\rho^2+H^2-\delta^2-\omega^2}
             {2\rho H}
    \right),
\]
so that
\[
    \gamma(\omega_-)=0,
    \qquad
    \gamma(\omega_+)=\pi,
\]
and \(\gamma\) is strictly increasing on
\((\omega_-,\omega_+)\).
\end{lemma}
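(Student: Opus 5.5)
The plan is to read everything off the triangle with side lengths \(R(\omega)\), \(H\) and \(\rho\) that underlies the vector identity \(\rho e^{-i\omega r}=R(\omega)e^{i\alpha(\omega)}+He^{-i\omega\sigma}\). I will use the law of cosines, plus one elementary estimate for the comparison \(\beta_\rho<\alpha\).

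\textbf{Step 1 (definitions are meaningful).} For \(\omega\in[\omega_-,\omega_+]\) we have \(\rho-H\le R(\omega)\le\rho+H\), with \(\rho-H>\delta>0\). I will check that both arccos arguments lie in \([-1,1]\). For \(\beta_\rho\), the inequality \(|\rho^2-H^2+R^2|\le 2\rho R\) is equivalent to \((\rho-R)^2\le H^2\le(\rho+R)^2\). For \(\beta_H\), the analogous condition is \((H-R)^2\le\rho^2\le (H+R)^2\). Both follow from the triangle inequalities above, using \(R\ge \rho-H\ge H-\rho\).

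\textbf{Step 2 (\(0\le\beta_\rho<\alpha<\pi/2\)).} The bounds \(\beta_\rho\ge0\) and \(\alpha=\arctan(\omega/\delta)<\pi/2\) are immediate since \(\delta>0\). Because \(\arccos\) is decreasing and \(\cos\alpha=\delta/R\), the inequality \(\beta_\rho<\alpha\) is equivalent to
\[
\frac{\rho^2-H^2+R^2}{2\rho R}>\frac{\delta}{R},
\qquad\text{i.e.}\qquad \rho^2-H^2+\delta^2+\omega^2>2\rho\delta .
\]
Since \(\omega\ge\omega_-\), the left-hand side is at least \(\rho^2-H^2+(\rho-H)^2=2\rho(\rho-H)\). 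Then \(\rho-H>\delta\), which holds by \eqref{eq:main_scalar_assumption}, gives the strict inequality.

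\textbf{Step 3 (formula for \(\gamma\)).} I will expand
\[
\cos(\beta_H-\beta_\rho)=\cos\beta_H\cos\beta_\rho+\sin\beta_H\sin\beta_\rho .
\]
The sines are nonnegative because \(\beta_\rho,\beta_H\in[0,\pi]\). Writing \(4\mathcal A\) for Heron's quantity \(\sqrt{4\rho^2R^2-(\rho^2-H^2+R^2)^2}\), we get \(\sin\beta_\rho=2\mathcal A/(\rho R)\) and \(\sin\beta_H=2\mathcal A/(HR)\); this is the same \(\mathcal A\) in both, since \(4\rho^2R^2-(\rho^2-H^2+R^2)^2=4H^2R^2-(\rho^2-H^2-R^2)^2\) is symmetric. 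A direct algebraic simplification over the common denominator \(4\rho HR^2\) should then give \(\cos\gamma=(\rho^2+H^2-R^2)/(2\rho H)\).

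Geometrically, \(\beta_\rho\) is the angle between the \(\rho\)- and \(R\)-sides, and \(\beta_H\) is the exterior angle between the \(R\)- and \(H\)-sides. Their difference is therefore the interior angle between the \(\rho\)- and \(H\)-sides, which is what the law of cosines produces. To conclude \(\gamma=\arccos(\cdot)\) rather than \(\pm\), I also need \(\gamma\in[0,\pi]\), i.e. \(\beta_H\ge\beta_\rho\). This follows from \(\cos\beta_H\le\cos\beta_\rho\), which reduces to \((\rho^2-H^2)(\rho-H)\ge R^2(\rho-H)\)-type comparisons. I expect this sign and branch bookkeeping to be the main obstacle. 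If the algebra is messy, I will instead argue purely via the interior angles of the triangle, whose sum is \(\pi\).

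\textbf{Step 4 (endpoints and monotonicity).} At \(\omega_-\) we have \(R=\rho-H\), so the argument equals \(\bigl(\rho^2+H^2-(\rho-H)^2\bigr)/(2\rho H)=1\) and \(\gamma=0\). At \(\omega_+\) we have \(R=\rho+H\), so the argument equals \(-1\) and \(\gamma=\pi\). Finally, the argument \((\rho^2+H^2-\delta^2-\omega^2)/(2\rho H)\) is strictly decreasing in \(\omega>0\) and \(\arccos\) is strictly decreasing on \([-1,1]\). Hence \(\gamma\) is strictly increasing on \((\omega_-,\omega_+)\).
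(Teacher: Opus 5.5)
Your proposal is correct and follows essentially the paper's argument: you compare \(\cos\beta_\rho\) with \(\cos\alpha=\delta/R\) using \(\rho-\delta>H\), identify \(\gamma\) through the triangle and the law of cosines, evaluate at \(R=\rho\mp H\), and get monotonicity by composing decreasing maps where the paper uses \(\gamma'=\omega/(\rho H\sin\gamma)\); your cosine-subtraction computation and your check that \(\gamma\in[0,\pi]\) make rigorous what the paper only asserts geometrically. One slip to fix: clearing denominators in \(\cos\beta_H\le\cos\beta_\rho\) gives \((\rho^2-H^2)(\rho-H)\le R^2(\rho+H)\), i.e.\ \(R\ge\rho-H\), which you already have from Step 1, whereas the inequality as you wrote it amounts to \(R^2\le\rho^2-H^2\), which fails near \(\omega_+\).
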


\begin{proof}
Since $\frac{\omega}{\delta}>0$, we have $0< \alpha(\omega)=\arctan(\frac{\omega}{\delta}) < \frac{\pi}{2}$. On the other hand, since 
\[
    \cos\alpha(\omega)
    =
    \frac{\delta}{R(\omega)},
\]
we have
\[
    \cos\beta_\rho(\omega)-\cos\alpha(\omega)
    =
    \frac{(\rho-\delta)^2-H^2+\omega^2}
         {2\rho R(\omega)}
    >0,
\]
because \(\rho-\delta>H\). Hence
\(\beta_\rho(\omega)<\alpha(\omega)\).

The angle between the sides of lengths \(\rho\) and \(H\) in the
corresponding triangle is
\(\gamma=\beta_H-\beta_\rho\). The law of cosines gives
\[
    R(\omega)^2
    =
    \rho^2+H^2-2\rho H\cos\gamma(\omega),
\]
which yields the stated expression for \(\gamma\). Since
\[
    R(\omega_-)=\rho-H,
    \qquad
    R(\omega_+)=\rho+H,
\]
we obtain the endpoint values. Finally,
\[
    \gamma'(\omega)
    =
    \frac{\omega}
         {\rho H\sin\gamma(\omega)}
    >0,
    \qquad
    \omega\in(\omega_-,\omega_+).
\]
\end{proof}

Since \(\beta_\rho<\alpha\), every admissible critical curve in
\eqref{eq:critical_curves_zero} satisfies \(n\geq1\).

For later use, let
\[
    N(r,\sigma)
    :=
    \#\left\{
        z:\chi_1(z)=0,\ \Re z>0
    \right\},
\]
where roots are counted with multiplicity. 

Since \(\chi_1(0)=\delta+H-\rho\neq0\), for every \((r,\sigma)\), zero can never be a characteristic root. Moreover, by Corollary 2.3 in \cite{RuanWei2003}, the number \(N(r,\sigma)\) can change only when a root crosses the imaginary axis. Hence \(N\) is constant on every connected region which does not intersect the critical curves.

\subsubsection{A uniform spectral region}
\label{subsec:uniform_spectral_region}

The parametrisation \eqref{eq:critical_curves_zero} yields a simple
condition which is independent of the relative size of the two delays.

\begin{proposition}
\label{prop:uniform_rcrit_zero}
Define
\begin{equation}\label{rcrit}
    r_{*}
    :=
    \min_{\omega\in[\omega_-,\omega_+]}
    \frac{
        2\pi-\alpha(\omega)-\beta_\rho(\omega)
    }{\omega}.
\end{equation}
Then, for every \(\sigma\geq0\),
\[
    0\leq r<r_{*}
\]
implies that \(\chi_1\) has no roots on the imaginary axis and $N(r,\sigma)=1$. Consequently, \(E_1\) is hyperbolic and its unstable dimension is
\(M=1\). 
\end{proposition}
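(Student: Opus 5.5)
The argument has two parts. First, for $0\le r<r_*$ and $\sigma\ge0$ arbitrary, $\chi_1$ has no purely imaginary root. Second, the unstable count $N(r,\sigma)$ equals $1$ on the whole region $\{0\le r<r_*,\ \sigma\ge0\}$.

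\textbf{No imaginary roots.} Suppose $\chi_1(i\omega)=0$ with $\omega>0$; we have already seen that $\omega=0$ is impossible. By the discussion preceding Lemma \ref{lem:angle_relations_zero}, $\omega\in[\omega_-,\omega_+]$ and $r=r(\omega)$ is given by \eqref{eq:critical_curves_zero} for some $n\in\mathbb Z$ and some orientation $\varepsilon\in\{-1,1\}$. Since $\beta_\rho<\alpha$, admissibility $r\ge0$ forces $n\ge1$. Because $\beta_\rho\ge0$, the choice $\varepsilon=-1$ gives the smaller value, so
\[
r\ \ge\ \frac{2\pi-\alpha(\omega)-\beta_\rho(\omega)}{\omega}\ \ge\ r_*.
\]
This contradicts $r<r_*$. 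The key point is that the bound on $r$ does not involve $\sigma$ at all, since $\sigma$ enters only through its own equation in \eqref{eq:critical_curves_zero}. One must also check that $r_*$ is a genuine positive minimum. The function inside the minimum is continuous on the compact interval $[\omega_-,\omega_+]$ with $\omega_->0$, and its numerator exceeds $2\pi-\pi>0$ because $\alpha<\pi/2$ and $\beta_\rho<\alpha$. Hence $r_*>0$ and the minimum is attained. I would also verify that the argument comparison in \eqref{eq:critical_curves_zero} really covers every imaginary root, including the degenerate triangles at $\omega=\omega_\pm$; there $\beta$ equals $0$ or $\pi$ and both orientations coincide.

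\textbf{Counting unstable roots.} The region $\mathcal R=[0,r_*)\times[0,\infty)$ is connected and contains no critical point. By Corollary 2.3 of \cite{RuanWei2003}, as already noted in the text, $N$ is constant on $\mathcal R$. So it suffices to compute $N(0,0)$. At $r=\sigma=0$,
\[
\chi_1(z)=z+\delta+H-\rho,
\]
whose single root $\rho-\delta-H>0$ is positive. Hence $N(0,0)=1$, and therefore $N\equiv1$ on $\mathcal R$.

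\textbf{Main obstacle.} The delicate step is justifying that the root count is continuous in $(r,\sigma)$ right down to the boundary $r=0$ or $\sigma=0$. There the equation changes from a neutral-free delay equation to one with fewer delays, and roots could in principle come in from infinity. I would handle this with the standard a priori bound: any root with $\Re z\ge0$ satisfies $|z|\le\delta+H+\rho$. Consequently no root can enter the closed right half-plane from infinity, and Rouché's theorem on a large half-disc yields continuity of $N$ across $\mathcal R$, including its edges. Combining the two parts, $E_1$ is hyperbolic with exactly one unstable characteristic root, namely $\lambda$, so $M=1$.
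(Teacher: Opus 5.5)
Your proposal is correct and follows the paper's proof essentially step for step. You rule out imaginary roots for $r<r_*$ using $n\ge1$ and the fact that the $\varepsilon=-1$ branch gives the smaller $r$, then carry $N(0,0)=1$ across the connected strip $0\le r<r_*$, $\sigma\ge0$ by constancy of the unstable root count. Your additional checks — $r_*>0$ and the minimum is attained, the degenerate endpoints $\omega=\omega_\pm$, and the uniform bound $|z|\le\delta+H+\rho$ for roots with $\Re z\ge0$ feeding a Rouch\'e argument — only make explicit what the paper delegates to Corollary~2.3 of Ruan and Wei.
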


\begin{proof}
Let \((r,\sigma)\) belong to a critical curve. Since \(n\geq1\), if
\(\varepsilon=-1\), then
\[
    r
    =
    \frac{
        2\pi n-\alpha(\omega)-\beta_\rho(\omega)
    }{\omega}
    \geq
    r_{*}.
\]
If \(\varepsilon=1\), then
\[
    r
    =
    \frac{2\pi n-\alpha(\omega)+\beta_\rho(\omega)}{\omega}
    \geq 
    \frac{2\pi-\alpha(\omega)-\beta_\rho(\omega)}{\omega}
    \geq
    r_{*}.
\]
Thus no critical curve intersects the connected strip
\[
    0\leq r<r_{*},
    \qquad
    \sigma\geq0.
\]

At \(r=\sigma=0\),
\[
    \chi_1(z)
    =
    z+\delta+H-\rho,
\]
and hence the only root is
\[
    z_*=\rho-\delta-H>0.
\]
Therefore \(N(0,0)=1\), and by constancy of the unstable root count, $N(r,\sigma)=1$ throughout the strip. This also completes the proof of Lemma
\ref{lem:fhw_spectral_conditions}.
\end{proof}

\begin{remark}
Since $\beta_\rho(\omega)<\alpha(\omega)<\frac{\pi}{2}$,
the critical value $r_{*}$  satisfies
\[
    r_{*}
    >
    \frac{\pi}{\omega_+}.
\]
Hence the explicit condition
\[
    0\leq r\leq
    \frac{\pi}{
        \sqrt{(\rho+H)^2-\delta^2}
    }
\]
is a simpler, although more restrictive, sufficient condition for
\(M=1\), uniformly in \(\sigma\).
\end{remark}

The critical curves and the uniform parameter region obtained above
are illustrated in Figure \ref{fig:spectral_region}.
\begin{figure}[htbp]
    \centering
    \includegraphics[width=0.8\textwidth]{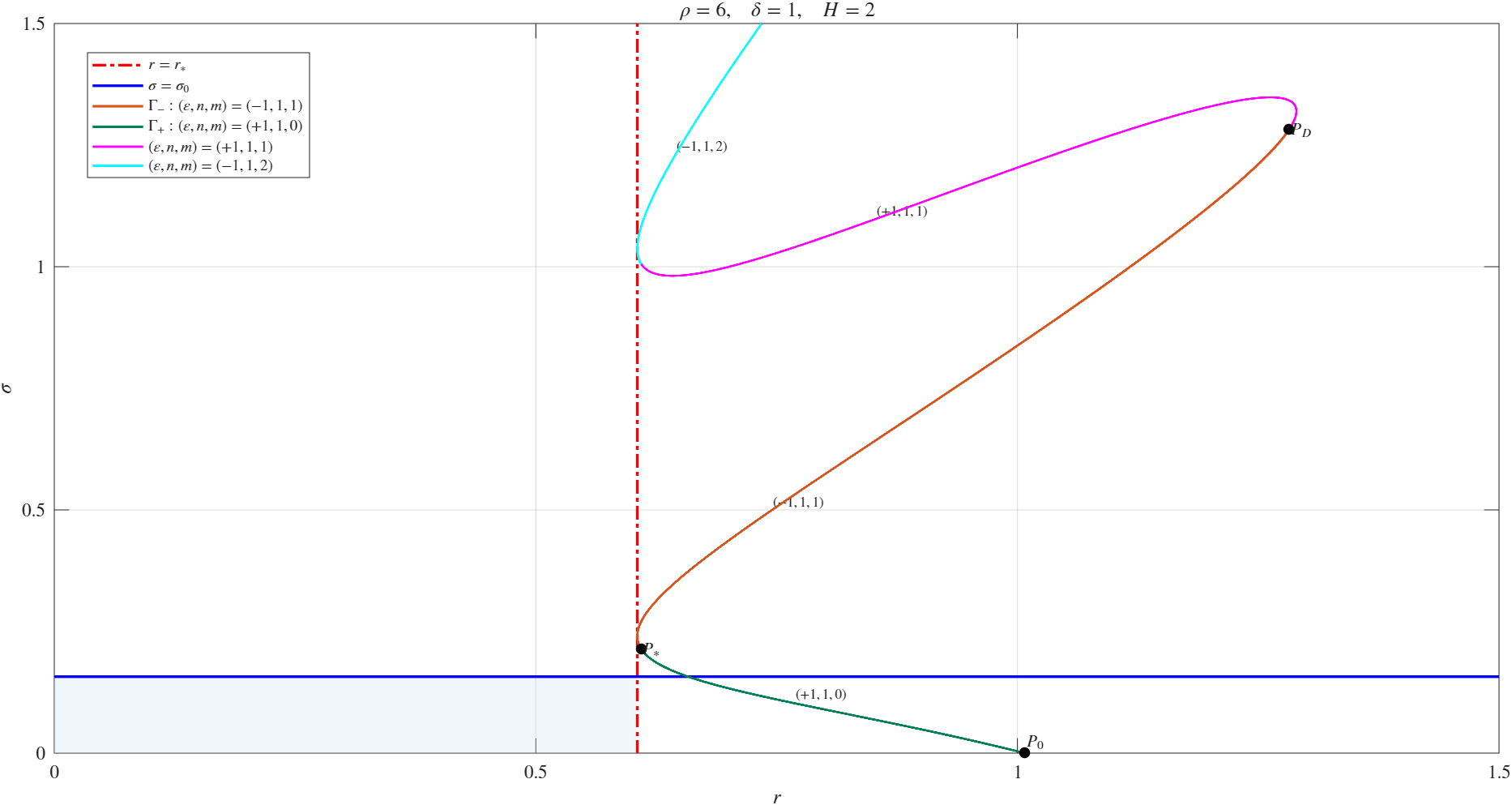}
    \caption{Selected critical curves of
    \(\chi_1(i\omega)=0\) in the \((r,\sigma)\)-plane for
    \(\rho=6\), \(\delta=1\), and \(H=2\). Each solid curve corresponds
    to the indicated triple \((\varepsilon,n,m)\) in the
    parametrisation \eqref{eq:critical_curves_zero}. The dashed vertical
    line represents \(r=r_*\), while the horizontal line represents
    \(\sigma=\sigma_0\). The shaded rectangle $ 0\leq r<r_*$, $
        0\leq\sigma\leq\sigma_0$,
    is the parameter region in which hypotheses \((H1)\) and \((H2)\)
    are simultaneously satisfied.}
    \label{fig:spectral_region}
\end{figure}

\section{Proof of Theorem \ref{thm:main_traveling_wave}}
\label{sec:application_fhw}
\begin{proof}
If \(0<\sigma<r\), the assumptions of the theorem imply alternative
\((D)\) in Theorem \ref{prop:scalar_profiles}. If \(0<r\leq\sigma\),
they imply alternative \((E)\). Therefore, in either case, Theorem
\ref{prop:scalar_profiles} provides a monotone heteroclinic solution
\(n_*\) of \eqref{eq:scalar_dde} connecting \(0\) to \(\kappa\).
Moreover, Lemma \ref{lem:system_heteroclinic} shows that
$$
U_*(t)=
    \begin{pmatrix}
        n_*(t)\\
        n_*(t)
    \end{pmatrix}
$$
is a heteroclinic solution of the associated delay system
\eqref{eq:associated_dde_system} connecting
\(E_1=(0,0)\) to \(E_2=(\kappa,\kappa)\). Hence hypothesis \((H3)\) of
Theorem \ref{thm:fhw} is satisfied.

The nonlinear functions \(F\) and \(g\) defined in Section
\ref{sec:fhw_embedding} are \(C^\infty\), the diffusion matrix is
\(D=I_2\), and the temporal Stieltjes measure is a finite combination of
Dirac masses. Therefore, the auxiliary reaction--diffusion system
\eqref{eq:auxiliary_diffusive_system} has the structural form required in
Theorem \ref{thm:fhw}. Moreover, since the spatial measure is concentrated
at \(y=0\), equation \eqref{H4verificacion} gives

$$
    \left\|
        \int_{\Omega}\dd|\mu|(y)\,|y|
    \right\|=0,
$$

and thus hypothesis \((H4)\) is also satisfied.

By Lemma \ref{lem:fhw_spectral_conditions}, all characteristic roots
associated with \(E_2\) have negative real parts. Therefore hypothesis
\((H1)\) of Theorem \ref{thm:fhw} holds. The same lemma also shows that
\(E_1\) is hyperbolic and that its unstable manifold is one-dimensional.
Hence hypothesis \((H2)\) holds with \(M=1\).

All the hypotheses of Theorem \ref{thm:fhw} are therefore satisfied.
Consequently, there exists \(c^*>0\) such that, for every \(c>c^*\), the
auxiliary system \eqref{eq:auxiliary_diffusive_system} admits a traveling
wave
$$
U(t,x)=\Phi(x+ct),
    \qquad
\Phi(s)=
    \begin{pmatrix}
        \phi_1(s)\\
        \phi_2(s)
    \end{pmatrix},
$$
satisfiying 
\begin{equation}\label{asint}
\lim_{s\to - \infty} \Phi(s)=E_1,\quad \lim_{s\to + \infty} \Phi(s)=E_2.
\end{equation} 

By Lemma \ref{lem:no_off_diagonal_waves}, every traveling wave of the
auxiliary system connecting these two equilibria lies on the diagonal.
It follows that
$$
    \phi_1(s)=\phi_2(s)=:\phi(s),
    \qquad s\in\mathbb R,
$$
and 
$$
    \lim_{s\to-\infty}\phi(s)=0,
    \qquad
    \lim_{s\to+\infty}\phi(s)=\kappa.
$$
Therefore $n(t,x)=\phi(x+ct)$ is a traveling wave solution of the scalar delayed diffusive Nicholson
equation \eqref{eq:nicholson_diffusive}, connecting the trivial equilibrium
to the positive equilibrium \(\kappa\), for every \(c>c^*\).
\end{proof}

\appendix

\section{Heteroclinic connections for $0<r\leq\sigma$}
\label{app:heteroclinic_r_le_sigma}
In this appendix we verify alternative \((E)\) in Theorem
\ref{prop:scalar_profiles}. Throughout the section we assume
\[
1<\frac{\rho}{\delta+H}\leq e,
\qquad
0<r\leq\sigma\leq\sigma_0,
\]
where $\sigma_0$ is the unique solution of $H\sigma_0e^{1+\delta\sigma_0}=1$. We recall that if we set $\kappa=\ln\left(\frac{\rho}{\delta+H}\right)$, then \(0<\kappa\leq1\). 

We apply Theorem 2.1 in
\cite{GomezGuayasamin2025} to equation \eqref{eq:scalar_dde}, written as
\[
-u'(t)+f(u_t)=0,
\]
where
\[
f(\phi)
=
-\delta\phi(0)-H\phi(-\sigma)
+\rho\phi(-r)e^{-\phi(-r)},
\qquad
\phi\in C([-\sigma,0];\mathbb R).
\]

\subsection{Common preliminaries}

\begin{lemma}
\label{lemma:H1_H2_r_le_sigma}
The functional \(f\) satisfies the following condition appearing in  \cite{GomezGuayasamin2025}:
\begin{description}
    \item[\((H1)\)]   $f(\widehat{0})=f(\widehat{\kappa})=0$ and $f(\widehat{u}) \neq 0$ for arbitrary $u \in(0, \kappa)$.
    \item[\((H2)\)]   (Exponential quasi-monotonicity). For $\mu=\frac{1}{\sigma}+\delta > 0$ we have
    $$
    f(\phi)-f(\psi)+\mu[\phi(0)-\psi(0)] \geq 0,
    $$
    for all $\phi, \psi \in C([-\sigma, 0]; \, \mathbb{R})$ such that
    \begin{enumerate}
    \item[(i)] $0 \leq \psi(s) \leq \phi(s) \leq \kappa, \quad  s \in[-\sigma, 0]$,
    \item[(ii)] $e^{\mu s}(\phi(s)-\psi(s))$ is non-decreasing for $s \in[-\sigma, 0]$.
    \end{enumerate}
    \end{description}
\end{lemma}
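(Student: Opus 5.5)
The plan is to verify the two conditions directly from the explicit form of \(f\). Both rely on two facts. First, the recruitment function \(b(x)=xe^{-x}\) is nondecreasing on \([0,1]\), and \(\kappa\le 1\) by \eqref{eq:main_scalar_assumption}. Second, the constraint \(\sigma\le\sigma_0\) is equivalent to \(H\sigma e^{1+\delta\sigma}\le 1\), because \(s\mapsto Hse^{1+\delta s}\) is increasing. Since \(0<r\le\sigma\), the point \(-r\) lies in \([-\sigma,0]\), so \(f\) is well defined on \(C([-\sigma,0];\mathbb R)\).

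For \((H1)\), I would evaluate \(f\) on constant functions:
\[
f(\widehat u)=u\bigl(\rho e^{-u}-(\delta+H)\bigr).
\]
This vanishes at \(u=0\). It also vanishes at \(u=\kappa\), since \(\rho e^{-\kappa}=\delta+H\) by the definition of \(\kappa\). For \(u\in(0,\kappa)\) we have \(e^{-u}>e^{-\kappa}\), so \(f(\widehat u)>0\), and in particular \(f(\widehat u)\neq 0\).

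For \((H2)\), set \(w=\phi-\psi\ge 0\) on \([-\sigma,0]\) and expand:
\[
f(\phi)-f(\psi)+\mu w(0)=(\mu-\delta)w(0)-Hw(-\sigma)+\rho\bigl[b(\phi(-r))-b(\psi(-r))\bigr].
\]
I would handle the three terms separately.

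\begin{itemize}
\item The bracketed recruitment term is nonnegative. Indeed, \(0\le\psi(-r)\le\phi(-r)\le\kappa\le1\) by (i), and \(b\) is nondecreasing on \([0,1]\).
\item The harvesting term is controlled by (ii). Monotonicity of \(e^{\mu s}w(s)\) between \(s=-\sigma\) and \(s=0\) gives \(e^{-\mu\sigma}w(-\sigma)\le w(0)\), that is, \(w(-\sigma)\le e^{\mu\sigma}w(0)\).
\end{itemize}

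Combining these, the whole expression is bounded below by \(\bigl(\mu-\delta-He^{\mu\sigma}\bigr)w(0)\). Since \(w(0)\ge 0\), it suffices to check that \(\mu-\delta-He^{\mu\sigma}\ge0\). With \(\mu=\frac1\sigma+\delta\), this reads
\[
\frac{1}{\sigma}\ge He^{1+\delta\sigma},
\]
which is exactly \(H\sigma e^{1+\delta\sigma}\le 1\) and holds because \(\sigma\le\sigma_0\).

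The only point requiring care is the step that uses (ii): it is where the specific choice of \(\mu\) and the bound \(\sigma\le\sigma_0\) enter. Everything else is a direct computation.
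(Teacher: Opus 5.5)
Your proposal is correct and matches the paper's proof step by step. Both use the decomposition with \(w=\phi-\psi\), the bound \(w(-\sigma)\le e^{\mu\sigma}w(0)\) from (ii), the monotonicity of \(ue^{-u}\) on \([0,\kappa]\subset[0,1]\), and the identity \(\mu-\delta-He^{\mu\sigma}=\frac1\sigma\bigl(1-H\sigma e^{1+\delta\sigma}\bigr)\ge0\). Your computation \(f(\widehat u)=u\bigl(\rho e^{-u}-(\delta+H)\bigr)\) and your check that \(-r\in[-\sigma,0]\) simply make explicit what the paper calls immediate.
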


\begin{proof}
Condition \((H1)\) is satisfied immediately. In order to chek $(H2)$ note that  the function \(s\mapsto Hs e^{1+\delta s}\) is strictly increasing on
\([0,+\infty)\), the number \(\sigma_0\) is well defined and unique. From
\(\sigma\leq\sigma_0\) we obtain
\[
H\sigma e^{1+\delta\sigma}\leq1.
\]
Consequently,
\[
\mu-\delta-He^{\mu\sigma}
=
\frac1\sigma\left(1-H\sigma e^{1+\delta\sigma}\right)
\geq0.
\]

Let \(\phi,\psi\in C([-\sigma,0];\mathbb R)\) satisfy
\[
0\leq\psi(s)\leq\phi(s)\leq\kappa,
\qquad s\in[-\sigma,0],
\]
and suppose that $s\longmapsto e^{\mu s}\bigl(\phi(s)-\psi(s)\bigr)$
is non-decreasing. 

Writing \(D=\phi-\psi\), comparison at \(-\sigma\) and \(0\) gives
\[
D(-\sigma)\leq e^{\mu\sigma}D(0).
\]
The Nicholson function \(h(u)=ue^{-u}\) is non-decreasing on
\([0,\kappa]\), because \(\kappa\leq1\). Therefore
\[
\begin{aligned}
f(\phi)-f(\psi)+\mu D(0)
&=
(\mu-\delta)D(0)-HD(-\sigma)
+\rho\bigl[h(\phi(-r))-h(\psi(-r))\bigr]
\\
&\geq
\left(\mu-\delta-He^{\mu\sigma}\right)D(0)
\geq0.
\end{aligned}
\]
\end{proof}
For consistency with the notation used in
\cite{GomezGuayasamin2025}, we introduce the characteristic function
\[
    \chi(z)
    :=
    -z-\delta-He^{-\sigma z}+\rho e^{-rz}
    =
    -\chi_1(z).
\]
It's not hard to see that $\chi(x)<0$ for every $x>\lambda$.

\subsection{An upper solution}
Let \(\lambda>0\) be the unique zero of $\chi(z)=0$, and let \(\mu\) be as in Lemma
\ref{lemma:H1_H2_r_le_sigma}. Define
\begin{equation}
\label{upper_r_le_sigma}
\overline\varphi(t)
=
\begin{cases}
\displaystyle
\frac{\kappa\mu}{\mu+\lambda}e^{\lambda t},
& t\leq0,
\\[3mm]
\displaystyle
\kappa\left(
1-\frac{\lambda}{\mu+\lambda}e^{-\mu t}
\right),
& t>0.
\end{cases}
\end{equation}

\begin{lemma}
\label{lemma:upper_r_le_sigma}
The function \(\overline\varphi\) is an upper solution of
\eqref{eq:scalar_dde} and belongs to the profile set \(\Gamma\).
\end{lemma}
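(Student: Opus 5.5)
We check the two requirements separately: membership of \(\overline\varphi\) in the profile set \(\Gamma\), and the upper-solution inequality \(-\overline\varphi'(t)+f(\overline\varphi_t)\leq0\) for almost every \(t\).

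\textbf{Membership in \(\Gamma\).} We recall that, as in \cite{GomezGuayasamin2025}, \(\Gamma\) consists of continuous non-decreasing profiles with \(0\le\varphi\le\kappa\), \(\varphi(-\infty)=0\) and \(\varphi(+\infty)=\kappa\). These properties are immediate for \(\overline\varphi\).
\begin{itemize}
\item Both branches take the value \(\kappa\mu/(\mu+\lambda)\) at \(t=0\), so \(\overline\varphi\) is continuous.
\item Both branches are increasing.
\item The limits at \(\pm\infty\) are \(0\) and \(\kappa\).
\end{itemize}
The one-sided derivatives at \(0\) are both equal to \(\kappa\mu\lambda/(\mu+\lambda)\), so \(\overline\varphi\) is in fact \(C^1\). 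Hence no junction condition at \(t=0\) is needed.

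\textbf{Region \(t\le0\).} Here all delayed arguments are negative, so \(\overline\varphi(s)=Ae^{\lambda s}\) with \(A=\kappa\mu/(\mu+\lambda)\). We use the inequality \(ue^{-u}\le u\), valid for \(u\ge0\). This gives
\[
-\overline\varphi'(t)+f(\overline\varphi_t)\le Ae^{\lambda t}\bigl(-\lambda-\delta-He^{-\lambda\sigma}+\rho e^{-\lambda r}\bigr)=Ae^{\lambda t}\chi(\lambda)=0 .
\]

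\textbf{Region \(t>0\): the main step.} On this region we use only that \(\overline\varphi\le\kappa\) is non-decreasing and that \(h(u)=ue^{-u}\) is non-decreasing on \([0,\kappa]\), because \(\kappa\le1\). The case split \(t>\sigma\) versus \(0<t\le\sigma\) is avoided by a comparison with the constant \(\widehat\kappa\), in the spirit of \((H2)\) of Lemma \ref{lemma:H1_H2_r_le_sigma}.

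Set \(D=\kappa-\overline\varphi\ge0\); then \(\widehat\kappa_t-\overline\varphi_t=D_t\). We have \(D(t)=\frac{\kappa\lambda}{\mu+\lambda}e^{-\mu t}\) for \(t>0\), while for \(t\le0\), \(D(t)=\kappa-Ae^{\lambda t}\ge D(0)\). It follows that \(e^{\mu s}D(s)\) is non-decreasing in \(s\) on \(\mathbb R\):
\begin{itemize}
\item for \(s>0\) it is constant;
\item for \(s\le0\), its derivative is \(e^{\mu s}(\mu\kappa-(\mu+\lambda)Ae^{\lambda s})=\mu\kappa e^{\mu s}(1-e^{\lambda s})\ge0\).
\end{itemize}
Since \(0\le\overline\varphi\le\kappa\), Lemma \ref{lemma:H1_H2_r_le_sigma} applies with \(\phi=\widehat\kappa\) and \(\psi=\overline\varphi_t\), and yields
\[
f(\overline\varphi_t)\le f(\widehat\kappa)+\mu D(t)=\mu D(t).
\]
For \(t>0\) we also have \(\overline\varphi'(t)=\mu D(t)\). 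Therefore
\[
-\overline\varphi'(t)+f(\overline\varphi_t)\le-\mu D(t)+\mu D(t)=0 ,
\]
which is the required inequality.

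The point to verify carefully is the monotonicity of \(e^{\mu s}D(s)\) across \(s=0\), which the computation above settles. Note that the ordering \(r\le\sigma\) is not used for the upper solution; it is needed only in the construction of the lower solution.
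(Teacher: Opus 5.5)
\textbf{Gap in the membership part.} You recall $\Gamma$ as the set of continuous non-decreasing profiles in $[0,\kappa]$ with limits $0$ and $\kappa$. In the exponentially quasi-monotone Wu--Zou setting used here, $\Gamma$ has a further defining condition: for every $s>0$, the function
\[
\gamma_s(t):=e^{\mu t}\bigl[\overline\varphi(t+s)-\overline\varphi(t)\bigr]
\]
must be non-decreasing in $t\in\mathbb R$, with $\mu=\frac1\sigma+\delta$ as in Lemma~\ref{lemma:H1_H2_r_le_sigma}. The paper's proof checks exactly this in its last step. It does not follow from monotonicity and the limits. For instance, $\min\{\kappa,\kappa e^{\lambda t}\}$ is non-decreasing with the right limits, yet $\gamma_s(t)=\kappa e^{\mu t}(1-e^{\lambda t})$ is strictly decreasing for $t<0$ close to $0$ and vanishes for $t\ge0$. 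This condition is what forces the $e^{-\mu t}$ tail in $\overline\varphi$, so as written your argument does not establish $\overline\varphi\in\Gamma$.

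\textbf{How to close it.} You already have the key ingredient. Write $\gamma_s(t)=e^{\mu t}D(t)-e^{-\mu s}e^{\mu(t+s)}D(t+s)$. If $t+s>0$, the second term is the constant $e^{-\mu s}\kappa\lambda/(\mu+\lambda)$, so monotonicity follows from your computation for $e^{\mu t}D(t)$. If $t+s\le0$, then $\gamma_s(t)=\frac{\kappa\mu}{\mu+\lambda}e^{(\mu+\lambda)t}(e^{\lambda s}-1)$, which is increasing. Continuity of $\gamma_s$ finishes the argument.

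\textbf{Comparison for $t>0$.} Your verification of the upper-solution inequality is correct, and for $t>0$ it takes a genuinely different route from the paper.
\begin{itemize}
\item The paper splits into two cases. For $0<t\le\sigma$ it bounds the Nicholson term by $\kappa(\delta+H)$ and studies an auxiliary function $g$ with $g(\sigma)\le0$ and $g'\ge0$. For $t\ge\sigma$ it computes the residual directly.
\item You compare $\overline\varphi_t$ with the constant $\widehat\kappa$ via $(H2)$ and use the identity $\overline\varphi'(t)=\mu(\kappa-\overline\varphi(t))$ on $(0,\infty)$. This handles all $t>0$ at once and shows why the tail rate must be $\mu$.
\end{itemize}

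\textbf{Minor correction.} Lemma~\ref{lemma:H1_H2_r_le_sigma} is stated on $C([-\sigma,0];\mathbb R)$ and evaluates $\phi(-r)$, which presupposes $r\le\sigma$. Your remark that the ordering $r\le\sigma$ is not used is true of the underlying inequality, since the same proof works on $C([-\max\{r,\sigma\},0];\mathbb R)$. It is not true of the lemma exactly as you cite it.
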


\begin{proof}
The two branches in \eqref{upper_r_le_sigma}, as well as their first
derivatives, agree at \(t=0\). Hence
\(\overline\varphi\in C^1(\mathbb R)\). Define
\[
\mathcal R[\varphi](t)
:=
-\varphi'(t)-\delta\varphi(t)-H\varphi(t-\sigma)
+\rho\varphi(t-r)e^{-\varphi(t-r)}.
\]
We claim  that \(\mathcal R[\overline\varphi](t)\leq0\).
Indeed, if \(t\leq0\), all delayed arguments lie in the left branch. Since
\(e^{-x}\leq1\) for \(x\geq0\),
\[
\begin{aligned}
\mathcal R[\overline\varphi](t)
&\leq
-\overline\varphi'(t)-\delta\overline\varphi(t)
-H\overline\varphi(t-\sigma)+\rho\overline\varphi(t-r)
\\
&=
\frac{\kappa\mu}{\mu+\lambda}e^{\lambda t}\chi(\lambda)
=0.
\end{aligned}
\]

Suppose now that \(0<t\leq\sigma\). Since
\(0\leq\overline\varphi(s)\leq\kappa\leq1\) for all \(s\), the monotonicity of
\(ue^{-u}\) on \([0,\kappa]\) gives
\[
\rho\overline\varphi(t-r)e^{-\overline\varphi(t-r)}
\leq
\rho\kappa e^{-\kappa}
=
\kappa(\delta+H).
\]
Also, \(t-\sigma\leq0\). It follows that
\[
\mathcal R[\overline\varphi](t)\leq g(t),
\]
where
\[
g(t)
=
H\kappa
-
\frac{\lambda\kappa}{\mu+\lambda}
(\mu-\delta)e^{-\mu t}
-
\frac{H\mu\kappa}{\mu+\lambda}e^{\lambda(t-\sigma)}.
\]
At \(t=\sigma\),
\[
g(\sigma)
=
-\frac{\lambda\kappa}{\mu+\lambda}e^{-\mu\sigma}
\left(\mu-\delta-He^{\mu\sigma}\right)
\leq0.
\]
Furthermore,
\[
\begin{aligned}
g'(t)
&=
\frac{\lambda\mu\kappa}{\mu+\lambda}
(\mu-\delta)e^{-\mu t}
-
\frac{H\lambda\mu\kappa}{\mu+\lambda}
e^{\lambda(t-\sigma)}
\\
&\geq
\frac{H\lambda\mu\kappa}{\mu+\lambda}
\left[
e^{-\mu(t-\sigma)}-e^{\lambda(t-\sigma)}
\right]
\geq0.
\end{aligned}
\]
Thus \(g(t)\leq g(\sigma)\leq0\) on this interval.

Finally, if \(t\geq\sigma\), then \(t-\sigma\geq0\) and, because
\(r\leq\sigma\), also \(t-r\geq0\). Using the same bound for the Nicholson
term, we obtain
\[
\begin{aligned}
\mathcal R[\overline\varphi](t)
&\leq
-\frac{\kappa\lambda\mu}{\mu+\lambda}e^{-\mu t}
-\delta\kappa
\left(1-\frac{\lambda}{\mu+\lambda}e^{-\mu t}\right)
\\
&\quad
-H\kappa
\left(1-\frac{\lambda}{\mu+\lambda}
e^{-\mu(t-\sigma)}\right)
+\kappa(\delta+H)
\\
&=
-\frac{\kappa\lambda}{\mu+\lambda}e^{-\mu t}
\left(\mu-\delta-He^{\mu\sigma}\right)
\leq0.
\end{aligned}
\]
Therefore \(\overline\varphi\) is an upper solution.

It remains to verify that \(\overline\varphi\in\Gamma\). Its limits at
\(\pm\infty\) are \(0\) and \(\kappa\), respectively, and
\(\overline\varphi'(t)>0\) for all \(t\). For \(s>0\), set
\[
\gamma_s(t)
=
e^{\mu t}
\left[\overline\varphi(t+s)-\overline\varphi(t)\right].
\]
If \(t+s\leq0\), then
\[
\gamma_s'(t)
=
\kappa\mu e^{(\mu+\lambda)t}
\left(e^{\lambda s}-1\right)>0.
\]
If \(t\leq0<t+s\), direct differentiation gives
\[
\gamma_s'(t)
=
\kappa\mu e^{\mu t}\left(1-e^{\lambda t}\right)\geq0.
\]
If \(t\geq0\), then
\[
\gamma_s(t)
=
\frac{\kappa\lambda}{\mu+\lambda}
\left(1-e^{-\mu s}\right),
\]
which is independent of \(t\). Since \(\gamma_s\) is continuous at the
transition points, it is non-decreasing on \(\mathbb R\). Hence
\(\overline\varphi\in\Gamma\).
\end{proof}

\subsection{A compatible lower solution}

Choose $\lambda<\nu<2\lambda$. Then \(\chi(\nu)<0\). Also, since \(\chi(0)>0\), continuity allows us to choose
\(\eta<0\), sufficiently close to zero, such that \(\chi(\eta)>0\). Set
\[
C_\eta:=\rho e^{-\eta r}-(\eta+\delta)
       =He^{-\eta\sigma}+\chi(\eta)>0
\]
and
\[
d_1:=e^{\eta r}(\lambda-\eta),
\qquad
d_2:=\chi(\eta)e^{\eta\sigma}.
\]
Both constants are positive. Define
\[
\begin{aligned}
D_1(t)
&:=
(\eta+\delta)e^{\eta t}
-\bigl(\nu+\delta+\chi(\nu)\bigr)e^{\nu t},
&&0\leq t\leq r,
\\
D_2(t)
&:=
He^{\nu(t-\sigma)}-C_\eta e^{\eta t},
&&r\leq t\leq\sigma,
\end{aligned}
\]
and let
\[
M_1:=\max_{0\leq t\leq r}|D_1(t)|,
\qquad
M_2:=\max_{r\leq t\leq\sigma}|D_2(t)|.
\]
When \(r=\sigma\), the second maximum is simply taken over the singleton
\(\{r\}\). Choose \(\theta>0\) so small that
\[
\theta\leq
\min\left\{
\frac12,\,
\frac{\lambda}{2\nu},\,
\frac{d_1}{2\max\{1,M_1\}},\,
\frac{d_2}{2\max\{1,M_2\}}
\right\}.
\]
In particular,
\[
0<\theta<1,
\qquad
\theta\nu\leq\frac{\lambda}{2},
\qquad
\theta M_i\leq\frac{d_i}{2},
\quad i=1,2.
\]

For \(A>0\), define
\begin{equation}
\label{lower_r_le_sigma}
\underline\varphi(t)
=
\begin{cases}
\displaystyle
A\left(e^{\lambda t}-\theta e^{\nu t}\right),
& t\leq0,
\\[3mm]
\displaystyle
A(1-\theta)e^{\eta t},
& t>0.
\end{cases}
\end{equation}

\begin{lemma}
\label{lemma:lower_compatible_r_le_sigma}
There exists \(A_*>0\) such that, for every \(0<A\leq A_*\),
\(\underline\varphi\) is a lower solution of \eqref{eq:scalar_dde} and the
pair \((\underline\varphi,\overline\varphi)\) satisfies the compatible conditions 
\((C1)\)--\((C3)\) in \cite{GomezGuayasamin2025}.
\end{lemma}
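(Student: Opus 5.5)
We verify $\mathcal R[\underline\varphi](t)\geq0$ by splitting $t$ into the four intervals $t\le0$, $0<t\le r$, $r<t\le\sigma$ and $t>\sigma$. Throughout, $A$ is chosen small enough that $\underline\varphi$ takes values in a small neighbourhood of $0$, where we may use $\rho ue^{-u}\geq\rho u(1-u)$. The quadratic error $\rho A^2(\cdot)$ will be absorbed by terms of order $A$, or, on the exponential branches, by comparing exponents. Note first that $\underline\varphi\geq0$ on $t\le0$, because $\theta e^{(\nu-\lambda)t}\le\theta<1$.

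\emph{Case $t\le0$.} All delayed arguments lie in the left branch, so
\[
\mathcal R[\underline\varphi](t)\geq A e^{\lambda t}\chi(\lambda)-A\theta e^{\nu t}\chi(\nu)-\rho A^2 e^{2\lambda(t-r)}
=-A\theta\chi(\nu)e^{\nu t}-\rho A^2e^{-2\lambda r}e^{2\lambda t}.
\]
Since $-\chi(\nu)>0$ and $\nu<2\lambda$, we have $e^{2\lambda t}\le e^{\nu t}$ for $t\le0$. The bound is therefore nonnegative once $A\le\theta|\chi(\nu)|e^{2\lambda r}/\rho$.

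\emph{Case $0<t\le r$.} Here $\underline\varphi(t)=A(1-\theta)e^{\eta t}$, while $t-r\le0$ and $t-\sigma\le0$ lie in the left branch. The linear part of $\mathcal R$ divided by $A$ expands as $e^{\lambda t}\cdot(\text{recruitment}-\text{harvest with }\lambda)$ plus $\theta$-terms, minus $(1-\theta)(\eta+\delta)e^{\eta t}$. Using $\chi(\lambda)=0$, this regroups as
\[
(\lambda-\eta)e^{\lambda t}\;+\;\theta\,D_1(t)\;+\;\text{lower-order terms}.
\]
This regrouping is presumably exactly why $D_1$ was defined with $\nu+\delta+\chi(\nu)=\rho e^{-\nu r}-He^{-\nu\sigma}$. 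On $[0,r]$ we have $e^{\lambda t}\geq e^{\eta r}$ up to exponentials bounded below by $d_1$. The choice $\theta M_1\le d_1/2$ then yields a lower bound $Ad_1/2$, and the quadratic error $O(A^2)$ is absorbed for $A$ small.

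\emph{Case $r<t\le\sigma$.} The recruitment term now uses the right branch, $\rho A(1-\theta)e^{\eta(t-r)}$, while the harvest term still uses the left branch. The linear part becomes
\[
(1-\theta)e^{\eta t}\chi(\eta)+He^{\eta(t-\sigma)}(1-\theta)-H\bigl(e^{\lambda(t-\sigma)}-\theta e^{\nu(t-\sigma)}\bigr).
\]
Regrouping with $C_\eta$ gives $d_2$ plus $\theta D_2(t)$ plus terms that are nonnegative by the ordering $e^{\lambda(t-\sigma)}\le e^{\eta(t-\sigma)}$ for $t\le\sigma$ (recall $\eta<0<\lambda$). This is the delicate bookkeeping. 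The condition $\theta M_2\le d_2/2$ then gives positivity, and again $O(A^2)$ is absorbed.

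\emph{Case $t>\sigma$.} Every argument lies in the right branch, so
\[
\mathcal R[\underline\varphi](t)\geq A(1-\theta)e^{\eta t}\chi(\eta)-\rho A^2(1-\theta)^2e^{2\eta(t-r)}.
\]
Because $\eta<0$, $e^{2\eta t}\le e^{\eta t}$, so this is nonnegative for $A$ small.

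The main obstacle is the two middle intervals, where the regrouping has to match the definitions of $D_1$, $D_2$, $d_1$, $d_2$ precisely.

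\emph{Compatibility conditions.} For (C1), $\underline\varphi\le\overline\varphi$: on $t\le0$, $\underline\varphi\le Ae^{\lambda t}\le\frac{\kappa\mu}{\mu+\lambda}e^{\lambda t}$ if $A\le\kappa\mu/(\mu+\lambda)$. On $t>0$, $\underline\varphi\le A\le\overline\varphi(0)\le\overline\varphi(t)$. Condition (C2), that $\underline\varphi\not\equiv0$, is clear. For (C3), the asymptotics and the sup/limit requirements follow from $\underline\varphi\to0$ at $\pm\infty$ and $\underline\varphi>0$. Finally, $A_*$ is taken as the minimum of all the smallness thresholds above.
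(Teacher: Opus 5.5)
Your residual verification uses the paper's four-interval decomposition, and your estimates for $t\le 0$, $r<t\le\sigma$ and $t>\sigma$ are essentially the paper's. The genuine gap is in the compatibility part, where you have misidentified (C3). In the exponential-ordering framework of Theorem~2.1 of Gomez--Guayasam\'in (adapted from Wu--Zou), (C3) is the requirement that
\[
F(t):=e^{\mu t}\bigl[\overline\varphi(t)-\underline\varphi(t)\bigr],\qquad \mu=\frac1\sigma+\delta,
\]
be non-decreasing on $\mathbb R$. This property lets condition (ii) of the exponential quasi-monotonicity hypothesis (H2) in Lemma~\ref{lemma:H1_H2_r_le_sigma} apply to the pair and to the iterates trapped between them. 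Limits of $\underline\varphi$ at $\pm\infty$ or sup-type properties do not replace it, and it does not follow from (C1) in general, so it must be checked.

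The check is short. Set $c=\kappa\mu/(\mu+\lambda)$. For $t\le0$,
\[
F(t)=(c-A)e^{(\mu+\lambda)t}+A\theta e^{(\mu+\nu)t},
\]
which is non-decreasing precisely because $A\le c$. For $t>0$, the difference $\overline\varphi-\underline\varphi$ is nonnegative and strictly increasing, since $\overline\varphi'>0>\underline\varphi'$ there. Hence $F'=e^{\mu t}\bigl[\mu(\overline\varphi-\underline\varphi)+(\overline\varphi-\underline\varphi)'\bigr]>0$. The paper instead computes $F'(t)=e^{\mu t}\bigl[\kappa\mu-A(1-\theta)(\mu+\eta)e^{\eta t}\bigr]$ directly. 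Continuity of $F$ at $t=0$ then gives monotonicity on all of $\mathbb R$. The threshold $A\le c$ you already imposed for (C1) suffices, so the repair is easy, but as written (C3) is not addressed.

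A smaller point concerns the interval $0<t\le r$. Using $\chi(\lambda)=0$ and the definition of $D_1$, the linear part divided by $A$ is exactly $(\lambda+\delta)e^{\lambda t}-(\eta+\delta)e^{\eta t}+\theta D_1(t)$. Compared with your leading term $(\lambda-\eta)e^{\lambda t}$, the leftover is $(\eta+\delta)(e^{\lambda t}-e^{\eta t})$. Its sign is that of $\eta+\delta$, and you leave it unaddressed. Your sentence ``$e^{\lambda t}\ge e^{\eta r}$ up to exponentials bounded below by $d_1$'' is not a valid estimate. The clean bound is
\[
e^{\eta t}\bigl[(\lambda+\delta)e^{(\lambda-\eta)t}-(\eta+\delta)\bigr]\ge e^{\eta t}(\lambda-\eta)\ge e^{\eta r}(\lambda-\eta)=d_1,
\]
using $e^{(\lambda-\eta)t}\ge1$ and $t\le r$. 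You should also state that the two branches agree at $t=0$. This makes $\underline\varphi$ continuous and piecewise $C^1$ with bounded derivative, as the notion of lower solution requires.
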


\begin{proof}
The two branches in \eqref{lower_r_le_sigma} agree at \(t=0\), so
\(\underline\varphi\) is continuous. For \(t\leq0\),
\[
\underline\varphi(t)
=
Ae^{\lambda t}\left(1-\theta e^{(\nu-\lambda)t}\right)>0,
\]
and, because \(\theta\nu\leq\lambda/2\),
\[
\underline\varphi'(t)
=
Ae^{\lambda t}
\left(\lambda-\theta\nu e^{(\nu-\lambda)t}\right)
\geq
\frac{A\lambda}{2}e^{\lambda t}>0,
\qquad t<0.
\]
For \(t>0\),
\[
\underline\varphi(t)=A(1-\theta)e^{\eta t}>0,
\qquad
\underline\varphi'(t)=A(1-\theta)\eta e^{\eta t}<0.
\]
Thus \(\underline\varphi\) is piecewise \(C^1\), locally absolutely
continuous, and has an essentially bounded derivative.

We now prove that
\(\mathcal R[\underline\varphi](t)\geq0\) for almost every \(t\). We use
repeatedly
\[
xe^{-x}\geq x-x^2,
\qquad x\geq0.
\]

If \(t<0\), all terms are evaluated on the left branch. Since the linear
part acts on \(e^{zt}\) as multiplication by \(\chi(z)\), we have
\[
\begin{aligned}
\mathcal R[\underline\varphi](t)
&\geq
-A\theta\chi(\nu)e^{\nu t}
-\rho\underline\varphi(t-r)^2
\\
&\geq
Ae^{\nu t}
\left[
-\theta\chi(\nu)
-\rho A e^{(2\lambda-\nu)t-2\lambda r}
\right]
\\
&\geq
Ae^{\nu t}
\left[
-\theta\chi(\nu)-\rho A e^{-2\lambda r}
\right].
\end{aligned}
\]
Hence the residual is non-negative provided
\[
A\leq
A_1:=
\frac{\theta[-\chi(\nu)]e^{2\lambda r}}{\rho}.
\]

Suppose that \(0<t<r\). Then the current value lies in the right branch,
whereas both delayed values lie in the left branch. Using
\(\underline\varphi(t-r)\leq A\), we obtain
\[
\mathcal R[\underline\varphi](t)
\geq
AG_1(t)-\rho A^2,
\]
where
\[
\begin{aligned}
G_1(t)
&=
-(\eta+\delta)(1-\theta)e^{\eta t}
-H\left(e^{\lambda(t-\sigma)}-\theta e^{\nu(t-\sigma)}\right)
\\
&\quad
+\rho\left(e^{\lambda(t-r)}-\theta e^{\nu(t-r)}\right)
\\
&=
e^{\eta t}
\left[
(\lambda+\delta)e^{(\lambda-\eta)t}-(\eta+\delta)
\right]
+\theta D_1(t).
\end{aligned}
\]
Since \(t\geq0\),
\[
e^{\eta t}
\left[
(\lambda+\delta)e^{(\lambda-\eta)t}-(\eta+\delta)
\right]
\geq
e^{\eta t}(\lambda-\eta)
\geq d_1.
\]
Therefore
\[
G_1(t)\geq d_1-\theta M_1\geq\frac{d_1}{2},
\]
and the residual is non-negative if
\[
A\leq A_2:=\frac{d_1}{2\rho}.
\]

Consider next \(r\leq t<\sigma\). This interval is empty when
\(r=\sigma\). Here \(t-r\geq0\) and \(t-\sigma<0\). Since
\[
0\leq\underline\varphi(t-r)
=
A(1-\theta)e^{\eta(t-r)}
\leq A,
\]
we obtain
\[
\mathcal R[\underline\varphi](t)
\geq
AG_2(t)-\rho A^2,
\]
where
\[
G_2(t)
=
(1-\theta)C_\eta e^{\eta t}
-He^{\lambda(t-\sigma)}
+\theta He^{\nu(t-\sigma)}.
\]
Writing
\[
G_2(t)
=
C_\eta e^{\eta t}
-He^{\lambda(t-\sigma)}
+\theta D_2(t),
\]
and using
\(C_\eta=He^{-\eta\sigma}+\chi(\eta)\), we get
\[
\begin{aligned}
C_\eta e^{\eta t}
-He^{\lambda(t-\sigma)}
&=
H\left[
e^{\eta(t-\sigma)}-e^{\lambda(t-\sigma)}
\right]
+\chi(\eta)e^{\eta t}
\\
&\geq
\chi(\eta)e^{\eta\sigma}
=d_2.
\end{aligned}
\]
Consequently,
\[
G_2(t)\geq d_2-\theta M_2\geq\frac{d_2}{2},
\]
so the residual is non-negative if
\[
A\leq A_3:=\frac{d_2}{2\rho}.
\]

Finally, let \(t\geq\sigma\). Since \(r\leq\sigma\), all terms are evaluated
on the right branch; at a zero delayed argument the two branches have the same
value. Hence
\[
\begin{aligned}
\mathcal R[\underline\varphi](t)
&\geq
A(1-\theta)\chi(\eta)e^{\eta t}
-\rho A^2(1-\theta)^2e^{2\eta(t-r)}
\\
&=
A(1-\theta)e^{\eta t}
\left[
\chi(\eta)
-\rho A(1-\theta)e^{\eta(t-2r)}
\right].
\end{aligned}
\]
Because \(\eta<0\) and \(t\geq\sigma\), this is non-negative provided
\[
A\leq
A_4:=
\frac{\chi(\eta)e^{-\eta(\sigma-2r)}}{\rho(1-\theta)}.
\]
Thus \(\underline\varphi\) is a lower solution whenever
\[
0<A\leq A_{\mathrm{low}}
:=
\min\{A_1,A_2,A_3,A_4\}.
\]

It remains to verify compatibility. Set
\[
c:=\frac{\kappa\mu}{\mu+\lambda}
\qquad\text{and}\qquad
A_*:=\min\{A_{\mathrm{low}},c\}.
\]
For \(0<A\leq A_*\) and \(t\leq0\),
\[
0\leq
\underline\varphi(t)
\leq
Ae^{\lambda t}
\leq
ce^{\lambda t}
=
\overline\varphi(t).
\]
For \(t\geq0\),
\[
0<
\underline\varphi(t)
\leq A
\leq c
=
\overline\varphi(0)
\leq\overline\varphi(t).
\]
Since \(\overline\varphi(t)\leq\kappa\), condition \((C1)\) follows.
Furthermore,
\(\underline\varphi(0)=A(1-\theta)>0\), which proves \((C2)\).

For \((C3)\), define
\[
F(t)
:=
e^{\mu t}
\left[
\overline\varphi(t)-\underline\varphi(t)
\right].
\]
If \(t\leq0\), then
\[
F(t)
=
(c-A)e^{(\mu+\lambda)t}
+
A\theta e^{(\mu+\nu)t},
\]
and therefore \(F'(t)\geq0\). If \(t\geq0\), then
\[
F'(t)
=
e^{\mu t}
\left[
\kappa\mu
-A(1-\theta)(\mu+\eta)e^{\eta t}
\right].
\]
This is positive when \(\mu+\eta\leq0\). If \(\mu+\eta>0\), then
\(e^{\eta t}\leq1\) and
\[
A(1-\theta)(\mu+\eta)
<
\frac{\kappa\mu}{\mu+\lambda}(\mu+\eta)
<
\kappa\mu,
\]
because \(\eta<0<\lambda\). Hence \(F'(t)>0\) also in this case. Since
\(F\) is continuous at \(t=0\), it is non-decreasing on \(\mathbb R\), and
\((C3)\) follows.
\end{proof}

A illustration of the lower solution and its residual is
shown in Figure \ref{fig:lower_solution_verification}.

\begin{figure}[htbp]
    \centering
    \includegraphics[width=0.8\textwidth]{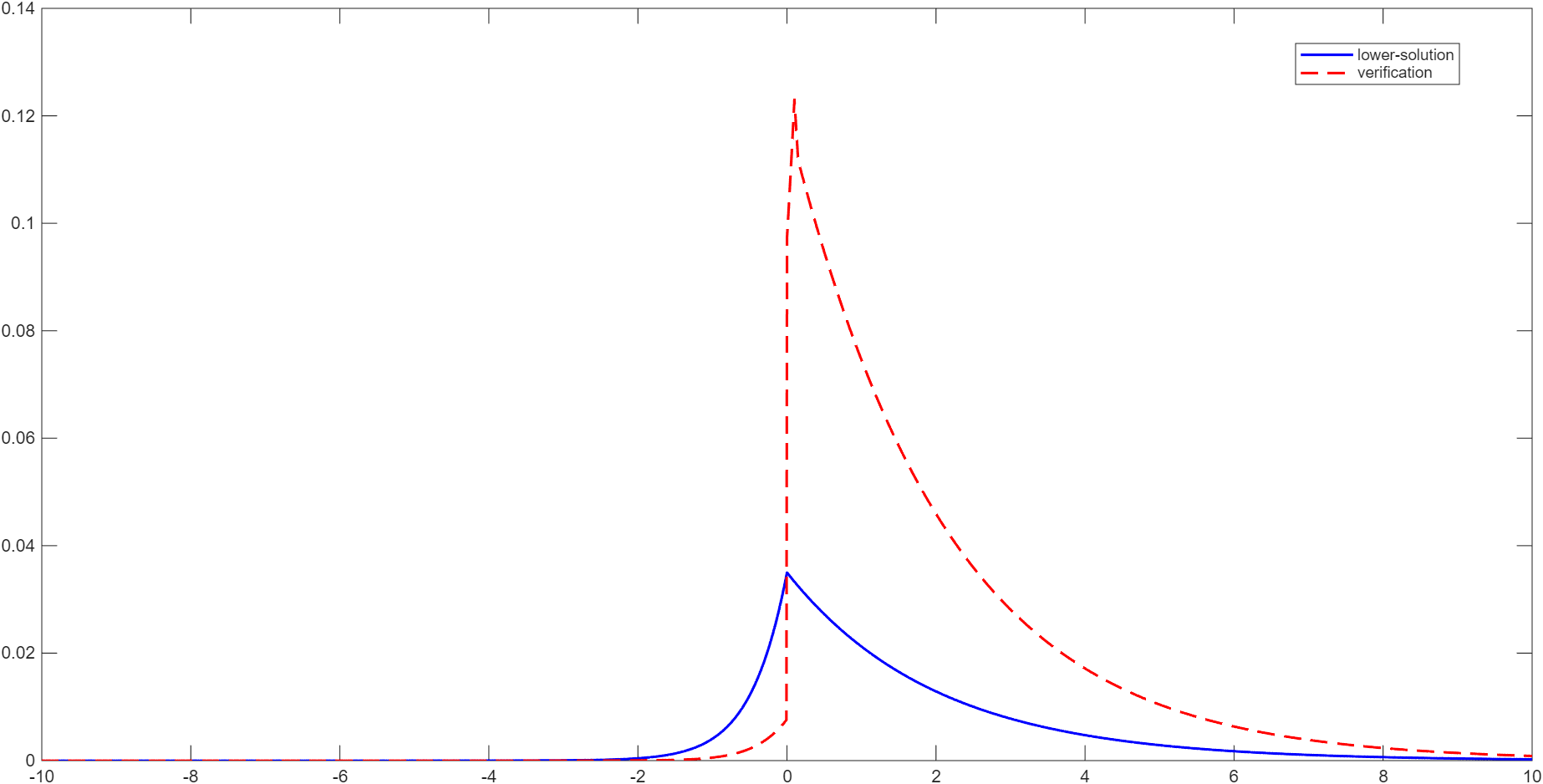}
    \caption{llustration of the lower solution
    \(\underline\varphi\) defined in
    \eqref{lower_r_le_sigma} (solid curve) and its residual
    $\mathcal R[\underline\varphi](t)
        =
        -\underline\varphi'(t)
        -\delta\underline\varphi(t)
        -H\underline\varphi(t-\sigma)
        +\rho\underline\varphi(t-r)
        e^{-\underline\varphi(t-r)}
    $(dashed curve) for $r=0.10$, $\sigma=0.15$, $\rho=6$, $\delta=1$ and $H=2$. The nonnegativity of the residual for almost every
    $t\in\mathbb R$ illustrates the lower-solution inequality.}
    \label{fig:lower_solution_verification}
\end{figure}

\subsection{Existence of the heteroclinic connection}

\begin{theorem}
\label{theorem:r_le_sigma}
Suppose that $1<\frac{\rho}{\delta+H}\leq e$. Let \(\sigma_0>0\) be the unique number satisfying $H\sigma_0e^{1+\delta\sigma_0}=1$.
If $0<r\leq\sigma\leq\sigma_0$, then equation \eqref{eq:scalar_dde} admits a monotone heteroclinic solution
\(n_*:\mathbb R\to\mathbb R\) connecting \(0\) to $\kappa$.
More precisely,
\[
n_*'(t)\geq0,
\qquad
\lim_{t\to-\infty}n_*(t)=0,
\qquad
\lim_{t\to+\infty}n_*(t)=\kappa.
\]
\end{theorem}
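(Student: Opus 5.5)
The plan is to obtain the heteroclinic connection directly from Theorem~2.1 of \cite{GomezGuayasamin2025}, the monotone iteration theorem for $-u'(t)+f(u_t)=0$. All of its hypotheses have already been verified in this appendix. Lemma~\ref{lemma:H1_H2_r_le_sigma} gives condition $(H1)$ (the only zeros of $f$ on constants in $[0,\kappa]$ are $0$ and $\kappa$) and the exponential quasi-monotonicity $(H2)$ with $\mu=\frac1\sigma+\delta$. Lemma~\ref{lemma:upper_r_le_sigma} gives an upper solution $\overline\varphi\in\Gamma$, which is monotone, satisfies $\overline\varphi(-\infty)=0$ and $\overline\varphi(+\infty)=\kappa$, and has $e^{\mu t}[\overline\varphi(t+s)-\overline\varphi(t)]$ non-decreasing. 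Lemma~\ref{lemma:lower_compatible_r_le_sigma} gives, for $0<A\le A_*$, a lower solution $\underline\varphi$ that is not identically zero and is compatible with $\overline\varphi$ through $(C1)$--$(C3)$. The main step is therefore to fix such an $A$ and check that the pair $(\underline\varphi,\overline\varphi)$ satisfies exactly the hypotheses of that theorem. In particular, the lower solution only needs to be continuous and piecewise $C^1$ with bounded derivative. No derivative junction condition is required, which is why the downward jump of $\underline\varphi'$ at $0$ is harmless.

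Applying the theorem then yields a solution $n_*$ of \eqref{eq:scalar_dde}. It is the limit of the iteration
\[
\varphi_{k+1}(t)=e^{-\mu t}\int_{-\infty}^{t}e^{\mu s}\bigl[\mu\varphi_k(s)+f((\varphi_k)_s)\bigr]\,\dd s,
\qquad \varphi_0=\overline\varphi .
\]
Quasi-monotonicity keeps each iterate in $\Gamma$, so every iterate is non-decreasing and lies between $\underline\varphi$ and $\overline\varphi$. This gives the monotone convergence to $n_*$ with $n_*'\ge0$, and the bounds $\underline\varphi\le n_*\le\overline\varphi\le\kappa$. If the cited theorem states the limits explicitly, I would use them. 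Otherwise I would recover them directly. As $t\to-\infty$, $0\le n_*\le\overline\varphi\to0$. As $t\to+\infty$, monotonicity gives a limit $L\in(0,\kappa]$; it is positive because $n_*(0)\ge\underline\varphi(0)=A(1-\theta)>0$ and $n_*$ is non-decreasing. Passing to the limit in the equation, by a Barbalat-type argument or by integrating $n_*'$ over intervals $[T,T+1]$, gives $f(\widehat L)=0$, and $(H1)$ forces $L=\kappa$.

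I expect the main obstacle to be this last identification of the limit at $+\infty$. The lower solution decays to $0$ on the right (its tail is $A(1-\theta)e^{\eta t}$), so the comparison bound alone does not rule out convergence to $0$. The positivity of $L$ therefore has to come from monotonicity combined with the positive value at a single point. The other point needing care is making sure the profile set $\Gamma$ and conditions $(C1)$--$(C3)$ as verified here match the cited theorem precisely. In particular, $(C3)$ should be the monotonicity of $e^{\mu t}(\overline\varphi-\underline\varphi)$, and the lower solution should not have to lie in $\Gamma$.
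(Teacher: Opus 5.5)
Your proposal is correct and follows the paper's proof. Both arguments verify the hypotheses of Theorem~2.1 in \cite{GomezGuayasamin2025} through Lemmas~\ref{lemma:H1_H2_r_le_sigma}, \ref{lemma:upper_r_le_sigma} and~\ref{lemma:lower_compatible_r_le_sigma} and then invoke its conclusion; your extra recovery of the limits (positivity of $n_*(+\infty)$ from monotonicity and $n_*(0)\geq\underline\varphi(0)>0$, then $(H1)$) is a sound supplement to what the paper leaves to the cited theorem.
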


\begin{proof}
By Lemma \ref{lemma:H1_H2_r_le_sigma}, hypotheses \((H1)\) and \((H2)\)
hold. By Lemma
\ref{lemma:upper_r_le_sigma}, \(\overline\varphi\) is an upper solution and
belongs to \(\Gamma\). Finally, Lemma
\ref{lemma:lower_compatible_r_le_sigma} provides a lower solution compatible
with \(\overline\varphi\) in the sense of \((C1)\)--\((C3)\). All the
hypotheses of Theorem 2.1 in \cite{GomezGuayasamin2025} are therefore
satisfied, and the conclusion follows.
\end{proof}

\begin{remark}
The positive exponentially decaying right tail in
\eqref{lower_r_le_sigma} is essential when \(r<\sigma\). Indeed, for
\(r\leq t<\sigma\), it ensures that
\(\underline\varphi(t-r)>0\), thereby avoiding the obstruction produced
by the truncated lower solution used in the regime \(\sigma<r\). The
corresponding positive contribution allows the delayed harvesting term
to be controlled in the lower-solution inequality. When \(r=\sigma\),
the intermediate interval \(r\leq t<\sigma\) is empty, so the same
construction also covers the equal-delay case. Consequently, no
additional condition of the form $ H>\rho e^{2\lambda(\sigma-r)}$
is required when \(0<r\leq\sigma\).
\end{remark}

\begin{remark}
The lower solution is increases on \((-\infty,0)\) and
decreases on \((0,+\infty)\). In fact,
\[
\underline\varphi'(0^-)=A(\lambda-q\nu)>0,
\qquad
\underline\varphi'(0^+)=A(1-q)\eta<0.
\]
The relevant point, however, is not the lack of monotonicity itself, but
the behavior at the matching point. Indeed,
\[
\underline\varphi'(0^-)
>
\underline\varphi'(0^+),
\]
so the derivative jump condition
\[
\underline\varphi'(0^-)
\leq
\underline\varphi'(0^+)
\]
used in \cite{WuZouErratum2001} is not satisfied.

This does not affect the present argument because the notion of lower
solution in Theorem 2.1 of \cite{GomezGuayasamin2025} only requires the
differential inequality to hold almost everywhere and does not impose a
junction condition on the one-sided derivatives. The lower solution is
used as a positive ordered barrier preventing the monotone iteration
sequence from converging to the trivial equilibrium.
\end{remark}

\section*{Acknowledgements}
This work forms part of César Guayasamín's Ph.D. thesis in the Doctoral Program in Applied Mathematics at Universidad del Bío-Bío.

\noindent This research has been partially supported by   the internal regular project RE2448106, funded by the Universidad del B\'io-B\'io (A. Gomez), by the National Agency for Research and Development, ANID-Chile through FONDECYT Postdoctorado project 3240354 (H. Elorreaga) and by National Agency for Research and Development, ANID-Chile, Scholarship Program, Doctorado Becas Chile 2025 -- 21252578 (C. Guayasam\'in).


\end{document}